\newtheorem {theorem*}{Theorem}
\newtheorem {theorem} {Theorem}
\newtheorem {lemma}  [theorem]{Lemma}
\newtheorem {remark} [theorem]{Remark}
\begin{document}
\title[Limit cycle  for semi-quasi-homogeneous polynomial vector fields] {\bf Limit cycles for planar
semi-quasi-homogeneous polynomial vector fields}

\author[ Yulin Zhao]
{ Yulin Zhao}
\address{Yulin Zhao, Department of Mathematics, School of  Mathematics and Computational Science,  Sun Yat-sen University, Guangzhou, 510275, People's Republic of  China.}
\email{mcszyl@mail.sysu.edu.cn,ylinzhao@yahoo.com.cn}

\thanks{ \noindent 2000 {\it Mathematics Subject Classification}.
Primary 34C05, 34A34, 34C14.\\
 {\it Key words and phrases}. Limit cycles, semi-quasi-homogeneous vector fields. \\
Supported by the NSF of China (No.11171355), the Ph.D. Programs Foundation of Ministry of Education of China (No. 20100171110040)  and Program for New Century Excellent Talents in University.}

\keywords{}
\date{}
\dedicatory{}

\maketitle

\maketitle

\begin{abstract}
This paper is concerned with the limit cycles for planar semi-quasi-homogeneous polynomial systems.  We give some explicit  criteria for the nonexistence and existence of periodic orbits. Let $N=N(p,q,m,n)$  be the maximum number of limit cycles of such system. A lower bound is given for $N=N(p,q,m,n)$.   The cyclicity and center problem are studied for some subfamilies of semi-quasi-homogeneous polynomial systems. Our results generalize those obtained for polynomial semi-homogeneous systems.
\end{abstract}

\section{Introduction and statement of the main results}\label{s1}

A function $f(x,y)$ is called a {\it $(p,q)$-quasi-homogeneous function of weighted degree $m$} if $f(\lambda^p
x,\lambda^q y)=\lambda^m f(x,y)$ for all $\lambda\in \mathbb{R}$. Let $P_m(x,y)$ and $Q_n(x,y)$ be $(p,q)$-
quasi-homogeneous polynomials of weighted degree $p-1+m$ and $q-1+n$, respectively.  We say that $X=(P_m(x,y),Q_n(x,y))$ is a {\it planar $(p,q)$ semi-quasi-homogeneous polynomial
vector field} if  $m\not=n$. The system of differential equations associated to $X$ is
\begin{equation}\label{eq1}
\frac{dx}{dt}=P_m(x,y)=\sum_{pi+qj=p+m-1}a_{ij}x^iy^j,\,\,\,\,\,\frac{dy}{dt}=Q_n(x,y)=\sum_{pi+qj=q+n-1}b_{ij}x^iy^j.
\end{equation}
Here $p, q,\,m$ are positive integers and $P_m(x,y)$ and $Q_n(x,y)$ are coprime in the ring $\mathbb{R}[x,y]$. To be short we denote this by  $(P(x,y),Q(x,y))=1$.

Observed that the above definition is the natural one for the following reason:
\begin{itemize}
\item[(i)] When $p=q=1$, $X$ is called {\it a semi-homogeneous vector field}\cite{CL1,CL2}. That is to say, the above definition  coincides with the usual definition of the homogeneous cases.
\item[(ii)] If $m=n$, then  $X$ is called {\it a $(p,q)$-quasi-homogeneous vector field of weighted degree $m$}, see \cite{AA}, Chapter 7. In particular if $p=q=1$, $X$ is {\it a homogeneous polynomial vector field of degree $m$}.
\item[(iii)] The finite (resp. infinity) singular points of semi-homogeneous vector field can be studied by homogeneous blow-up ({\it resp.} Poincar\'e compactification)  whereas the one of semi-quasi-homogeneous vector fields can  be studied by the use of quasi-homogeneous blow-up ({\it resp.} Poincar\'e-Lyapunov compactification)\cite{DLA}.
\end{itemize}

One of the classical problem in the qualitative theory of planar polynomial  systems is the study of the number of limit cycles, which is known as the 16th Hilbert's problem. There has been a substantial amount of work devoted to solving this problem for homogeneous, quasi-homogeneous or semi-homogeneous polynomial vector fields. W. Li {\it etc.}\cite{LLYZ} provided an upper bound for the maximum number of limit cycles bifurcating from the period annulus of any homogeneous and quasi-homogeneous centers, which can be obtained using the Abelian integral method. Gavrilov {\it etc.}\cite{GGG} gave a more general results on the limit cycles bifurcated from the periodic orbits of quasi-homogeneous centers.  Cima and Llibre \cite{CL} investigated the algebraic and topological classification of  homogeneous cubic vector fields. In 1997, Cima, Gasull and Manosas\cite{CGM2}  studied the limit cycles for semi-homogeneous vector fields. They proved that semi-homogeneous  systems can exhibit periodic orbits only when $nm$ is odd and there exists such system with at least $(n+m)/2$ limit cycles.  In the papers \cite{CL1,CL2} Cair\'o and Llibre study the phase portraits of semi-homogeneous vector fields with $(m,n)=(1,2)$ and $(m,n)=(1,3)$ respectively. Some papers  are concerned with the  integrability \cite{chavaga,LZ} and structural stability\cite{LRR,LRR1,OZ} for quasi-homogeneous or semi-homogeneous  polynomial vector fields.

It is always be possible to decompose a vector field $\mathcal {X}$ in a formal series centered in $0\in \mathbb{R}^n$: $\mathcal{X}=\sum_{j\geq k}X_j$, where $X_j$ are $(p,q)$-quasi-homogeneous vector field of weighted degree $k$, $k$ is the first integer such that $X_k\not=0$\cite{BM}. Therefore  in order to get more information on limit cycles of general vector fields $\mathcal{X}$ it is essential to study the geometrical properties of  quasi-homogeneous and semi-qusi-homogeneous vector field. We note that Coll, Gasull and Prohens have investigated planar vector fields defined by the sum of two quasi-homogeneous vector fields\cite{CGP}.

In this paper we  study the limit cycles of planar semi-quasi-homogeneous polynomial vector fields, defined in \eqref{eq1}. To simplify the statements of the main results, first of all we give the following lemma, which will be proved in section \ref{s2}.

\begin{lemma}\label{l1}
Suppose $(p,q) = k \geq 2$ in \eqref{eq1}, then there exists a unique vector  $(p',q',m',n')$ with $(p',q')=1$ such that  system \eqref{eq1} is a
$(p',q')$ semi-quasi-homogeneous vector field.
\end{lemma}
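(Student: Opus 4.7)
The plan is to produce $(p',q',m',n')$ by dividing out the common factor $k$ from $(p,q)$ and then verifying that the resulting rescaling is the unique one compatible with $\gcd(p',q')=1$.

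First I would set $p':=p/k$ and $q':=q/k$, which are positive coprime integers. Since $P_m$ is coprime to $Q_n$ in $\mathbb{R}[x,y]$, both polynomials are nonzero; in particular there exist non-negative integers $i,j$ with $pi+qj=p+m-1$, and since $k\mid p$ and $k\mid q$, this relation forces $k\mid(m-1)$. I define $m':=1+(m-1)/k$ and, by the same reasoning applied to $Q_n$, $n':=1+(n-1)/k$.

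To verify existence, a direct computation shows that each monomial $x^iy^j$ of $P_m$ satisfies
$$ p'i+q'j \;=\; \frac{pi+qj}{k} \;=\; \frac{p+m-1}{k} \;=\; p'+m'-1, $$
so $P_m$ is $(p',q')$-quasi-homogeneous of weighted degree $p'+m'-1$. The identical computation applies to $Q_n$, yielding weighted degree $q'+n'-1$. Finally $m'-n'=(m-n)/k\neq 0$, so the vector field is genuinely $(p',q')$ semi-quasi-homogeneous.

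For uniqueness, suppose $(p'',q'',m'',n'')$ is another valid tuple with $\gcd(p'',q'')=1$. Whenever $P_m$ (or $Q_n$) contains two distinct monomials $x^{i_1}y^{j_1}$ and $x^{i_2}y^{j_2}$, subtracting the two weighted-degree relations for each weight system gives $p(i_1-i_2)+q(j_1-j_2)=0$ and $p''(i_1-i_2)+q''(j_1-j_2)=0$, so $(p'',q'')\parallel(p,q)$. Together with coprimality and positivity this forces $(p'',q'')=(p',q')$, and then the weighted-degree identities yield $m''=m'$ and $n''=n'$.

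The main obstacle is the degenerate situation where both $P_m$ and $Q_n$ reduce to a single monomial; coprimality in $\mathbb{R}[x,y]$ then forces them, after possible interchange, into the form $a\,x^{i_0}$ and $b\,y^{j_0}$, so the two-monomial argument above is not directly available. One has to combine the pair of scalar relations $p''(i_0-1)=m''-1$ and $q''(j_0-1)=n''-1$ with the conditions $m''\neq n''$ and $\gcd(p'',q'')=1$, and use the original $(p,q)$-relations $p(i_0-1)=m-1$, $q(j_0-1)=n-1$ with $\gcd(p,q)=k$, to confirm that the canonical reduction $(p'',q'')=(p/k,q/k)$ is the one that must be taken.
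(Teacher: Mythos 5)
Your existence construction is exactly the paper's proof: write $p=kp'$, $q=kq'$ so that $(p',q')=1$, use the weighted-degree relation $pi+qj=p-1+m$ satisfied by any monomial actually occurring in $P_m$ to deduce $k\mid m-1$ (and likewise $k\mid n-1$), and set $m'=1+(m-1)/k$, $n'=1+(n-1)/k$. Your added observations --- that $P_m\not\equiv 0$ guarantees such a monomial exists, and that $m'-n'=(m-n)/k\not=0$ so the reduced field is genuinely semi-quasi-homogeneous --- are correct and make this half slightly more careful than the paper's, whose proof ends here with ``the assertion follows'' and never addresses uniqueness at all.

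The uniqueness half is therefore entirely your own, and its final step is a genuine gap that cannot be closed: in the degenerate case you flag (both components monomials, say $P_m=a x^{i_0}$, $Q_n=b y^{j_0}$), uniqueness is actually \emph{false}. Any coprime positive pair $(p'',q'')$ then works, since the two scalar relations simply force $m''=p''(i_0-1)+1$ and $n''=q''(j_0-1)+1$, and $m''\not=n''$ holds whenever $p''(i_0-1)\not=q''(j_0-1)$. Concretely, take $P_3=x^2$, $Q_5=y^2$ with $(p,q)=(2,4)$, so $k=2$, $m=3$, $n=5$: the system is $(1,2)$ semi-quasi-homogeneous with $(m',n')=(2,3)$, but it is equally $(1,3)$ semi-quasi-homogeneous with $(m'',n'')=(2,4)$. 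So the program sketched in your last paragraph --- confirming that the canonical reduction ``is the one that must be taken'' --- has no proof because the conclusion is wrong; your parallelism argument in the two-monomial case is fine and is in fact the only situation in which uniqueness can be established. The honest options are to exclude the bi-monomial case from the uniqueness claim, or to read ``unique'' as uniqueness of the weight vector proportional to $(p,q)$, which is precisely what your parallelism argument delivers. Since the paper itself proves only existence, your proposal covers everything the paper does, but it should not end by suggesting the degenerate case can be confirmed.
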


By Lemma \ref{l1},  we   suppose that the following conventions hold without loss of generality.

\medskip

\noindent{\bf Convention} {\it  We  always  assume in this paper  that
\begin{itemize}
  \item[(i)] $p$ is odd and $(p,q)=1$, and
  \item [(ii)] $P_m(x,y)$ and $Q_m(x,y)$ are coprime polynomials.
\end{itemize}}
\medskip

Our main results are the following four  theorems. The first one provides  explicit  criteria for the nonexistence and existence of periodic orbits, and a lower bound of $N=N(p,q,m,n)$ for  the maximum number of limit cycles of system \eqref{eq1}.
\begin{theorem}\label{mainth}
Let $P_m(x,y)$ and $Q_n(x,y)$ are $(p,q)$-
quasi-homogeneous polynomials of degree $p-1+m$ and $q-1+n$, respectively, $m\not=n$. For system \eqref{eq1}, the following statements hold:
\begin{itemize}
\item[(i)] If either $q\nmid p+m-1$ or $p\nmid q+n-1$, then system \eqref{eq1} has no periodic orbit.
\item[(ii)] If $q| p+m-1$ and  $p| q+n-1$, then system \eqref{eq1} is reduced to the following normal form
\begin{equation}\label{eq1'}
\frac{dx}{dt}=P_m(x,y)=\sum_{i=0}^{[r_1/p]}a_{i}x^{iq}y^{r_1-ip},\,\,\,\,\,\frac{dy}{dt}=Q_n(x,y)=\sum_{j=0}^{[r_2/q]}b_jx^{r_2-jq}y^{jp}.
\end{equation}
where $r_1=(p+m-1)/q,\,\,r_2=(q+n-1)/p$, $[r_1/p]$ denotes the integer part of $r_1/p$.

 If system \eqref{eq1'}  has  periodic orbit, then one of the following conditions holds:
\begin{itemize}
   \item [(ii.1)] If both $p$ and $q$ are odd, then $m,\,n,\,r_1$ and $r_2$ are odd.
   \item [(ii.2)] If $p$ is odd and  $q$ is even, then $m$ and $n$ are  even,  $r_1$ and $r_2$  are  odd, respectively.
 \end{itemize}

\item[(iii)] If (ii.1) holds, then there exist $P_m(x,y)$ and $Q_n(x,y)$ such that each of the following assertions holds:
\begin{itemize}
\item[(a)] All solutions of \eqref{eq1'} are periodic.
\item[(b)] No solutions of \eqref{eq1'} are periodic.
\item[(c)] There are periodic and non-periodic solutions of system \eqref{eq1'}.
\end{itemize}
Let  $N=N(p,q,m,n)$  be the maximum number of limit cycles of system \eqref{eq1'}.  Then $N\geq [([r_1/p]+1)/2]+[([r_2/q]+1)/2]-1$.
\item[(iv)] If (ii.2) holds and system \eqref{eq1'} has periodic orbit, then the origin is a center.
\end{itemize}
\end{theorem}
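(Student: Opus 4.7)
The plan is to pass to quasi-homogeneous polar coordinates $x=r^{p}\mathrm{Cs}(\theta)$, $y=r^{q}\mathrm{Sn}(\theta)$, which turns \eqref{eq1} into a scalar ODE for $r(\theta)$ and reduces all of the statements to properties of a Poincar\'e return map on a transversal. For (i), if $q\nmid p+m-1$ then $pi+qj=p+m-1$ has no solution with $i=0$, so $x\mid P_{m}$ and $\{x=0\}$ is invariant; likewise $p\nmid q+n-1$ makes $\{y=0\}$ invariant. The coprimality hypothesis together with the quasi-homogeneous scaling forces the origin to be the unique finite critical point, so any periodic orbit would have to encircle it and therefore cross an invariant axis, which is impossible.

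For (ii), solving $pi+qj=p+m-1$ with $(p,q)=1$ gives $q\mid i$, whence $i=qk$, $j=r_{1}-pk$ with $0\le k\le[r_{1}/p]$; this produces the normal form, $Q_{n}$ being analogous. For the parity restrictions, note $P_{m}(0,y)=a_{0}y^{r_{1}}$: if $a_{0}=0$ then the $y$-axis is invariant and no orbit can encircle the origin; if $a_{0}\neq 0$ and $r_{1}$ is even then $\dot x$ has constant sign on $\{x=0\}\setminus\{0\}$, so orbits cross the axis in only one direction and again no closed orbit exists. Hence $r_{1}$ must be odd, and by the symmetric argument on $\{y=0\}$ so must $r_{2}$; the parities of $m=qr_{1}+1-p$ and $n=pr_{2}+1-q$ are then determined by that of $q$, producing (ii.1) when $q$ is odd and (ii.2) when $q$ is even.

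For (iii), take (a) the Hamiltonian $H=\alpha x^{r_{2}+1}+\beta y^{r_{1}+1}$ with $\alpha,\beta>0$: the associated vector field $(-H_{y},H_{x})$ lies in \eqref{eq1'}, $H$ is positive definite with compact level sets, so all orbits are periodic. For (b) take $P_{m},Q_{n}$ whose combination $pxP_{m}+qyQ_{n}$ is sign-definite off the origin, making a quasi-radial function strictly monotone along orbits; (c) follows from a convex combination. For the lower bound on $N$, perturb the Hamiltonian center of (a) inside \eqref{eq1'} and analyse the first Melnikov function $M(h)=\oint_{H=h}(Q_{n}\,dx-P_{m}\,dy)$. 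Applying Green's formula and using the double symmetry of the oval $\{H=h\}$ under $(x,y)\mapsto(\pm x,\pm y)$, the integral of every monomial whose exponent parities are incompatible with the oval's reflections vanishes; in case (ii.1) this leaves exactly the $a_{k}$ with $k$ odd and the $b_{\ell}$ with $\ell$ odd, giving $[([r_{1}/p]+1)/2]+[([r_{2}/q]+1)/2]$ surviving integrals. The change of variables $(x,y)\mapsto(h^{1/(r_{2}+1)}x,h^{1/(r_{1}+1)}y)$ on the reference oval $\{H=1\}$ shows each surviving integral is a monomial $c_{ij}h^{\mu_{ij}}$, and one checks directly that the exponents $\mu_{ij}$ are pairwise distinct; the generators thus form a Chebyshev system on $(0,\infty)$, so their combinations can be arranged to have $[([r_{1}/p]+1)/2]+[([r_{2}/q]+1)/2]-1$ simple zeros in $h$, producing that many hyperbolic limit cycles by Poincar\'e--Pontryagin.

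For (iv), in case (ii.2) the parities $q$ even and $r_{2}$ odd give exponents $qk$ even and $r_{2}-qk$ odd, so $P_{m}(-x,y)=P_{m}(x,y)$ and $Q_{n}(-x,y)=-Q_{n}(x,y)$; the involution $(x,y,t)\mapsto(-x,y,-t)$ therefore preserves \eqref{eq1'}. This time-reversal symmetry with respect to the $y$-axis forces the first-return map $P^{+}$ on the $y$-axis to satisfy $P^{+}\circ P^{+}=\mathrm{id}$, so every orbit meeting the $y$-axis is periodic; since any orbit encircling the origin must meet it, the existence of one periodic orbit forces the origin to be a center. The main obstacle in this program is the lower-bound step of (iii): both the parity bookkeeping identifying which Melnikov integrals survive the oval symmetries and the scaling argument yielding distinct monomial exponents in $h$ require careful computation, after which the Chebyshev zero-counting is classical.
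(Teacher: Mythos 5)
Your proposal is correct in substance and, at the level of overall strategy, parallels the paper: invariant axes plus uniqueness of the finite singular point (the paper's Lemma \ref{l10}) for (i), the Diophantine description of the exponents for the normal form \eqref{eq1'}, a perturbation of the Hamiltonian $\dot x=y^{r_1},\ \dot y=-x^{r_2}$ with an Abelian-integral count for the lower bound in (iii), and $y$-axis reversibility for (iv). Two of your steps take a genuinely different route. First, for the parity constraints in (ii) you argue uniformly: $\dot x|_{x=0}=a_0y^{r_1}$ and $\dot y|_{y=0}=b_0x^{r_2}$ must change sign along the axes, forcing $r_1,r_2$ odd, and then the parities of $m=qr_1+1-p$ and $n=pr_2+1-q$ follow arithmetically from the parity of $q$. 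The paper instead handles the odd--odd case through the quasi-homogeneous symmetry $P_m(-x,-y)=(-1)^mP_m(x,y)$ (proving $m$ odd first, then $r_1$ odd by forcing all $a_i$ to vanish otherwise) and needs a separate lemma (Lemma \ref{l11}) when $q$ is even; your version is shorter and treats both cases at once. Second, for the lower bound the paper evaluates the Abelian integral in Lyapunov's generalized polar coordinates, using the $\mathrm{Cs}/\mathrm{Sn}$ calculus (Lemmas \ref{l6} and \ref{l7}) to conclude that exactly the odd-index coefficients survive and that $I(\rho)$ is $\rho^{2l_1l_2}$ times a generalized polynomial with exponents $\pm(2i-1)(m-n)/2$; your Green's-formula-plus-oval-symmetry bookkeeping identifies the same surviving coefficients ($i,j$ odd, hence $[([r_1/p]+1)/2]+[([r_2/q]+1)/2]$ free integrals), and your scaling of the oval reproduces the same pairwise distinct exponents, distinct precisely because $m\neq n$. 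The two computations are equivalent; yours avoids the special functions at the cost of checking distinctness by hand, and the concluding Chebyshev argument is the paper's ``suitable choice of coefficients''.

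Two small repairs are needed. In (iii), ``(c) follows from a convex combination'' is not right as stated: a convex combination of a center example and a no-periodic-orbit example inside this family need not exhibit both behaviors (with the natural choices it interpolates between a center and a saddle-type flow). But (c) is immediate from your own Melnikov construction, which is how the paper gets it: any system possessing a hyperbolic limit cycle has both periodic and non-periodic solutions. Similarly in (b) you postulate, rather than exhibit, a choice making $pxP_m+qyQ_n$ sign-definite; the parity constraints show this expression can at best be semi-definite (vanishing on an axis) unless $p\mid r_1$ and $q\mid r_2$, and one must also respect the coprimality convention. The paper simply takes $b_0=0$, making $y=0$ invariant; alternatively $\dot x=y^{r_1},\ \dot y=x^{r_2}$ works, since its first integral $x^{r_2+1}/(r_2+1)-y^{r_1+1}/(r_1+1)$ has non-compact level sets. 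Finally, in (iv) the claim ``every orbit meeting the $y$-axis is periodic'' overstates the reversibility: only orbits meeting the axis \emph{twice} are closed. This is harmless for your conclusion, and it is exactly the paper's argument, because an orbit spiraling toward a putative limit cycle crosses the axis repeatedly, so reversibility forces it to be closed, a contradiction; both your write-up and the paper then pass from ``no limit cycles plus one periodic orbit plus a unique singular point'' to ``center'' at the same level of brevity.
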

The results in Theorem \ref{mainth} generalize those obtained for semi-homogeneous polynomial vector fields, proved in \cite{CGM2}.

One of important problem in the qualitative theory of planar polynomial vector fields is the study of the local phase portrait at the singularities to characterize when a singular is a center, which is called the {center problem}. A singular point $O$ of a real planar analytic vector field $\mathcal{X}$ is called a
{\it center} if there exists a neighborhood $U$ of $O$ such that $U\backslash\{O\}$ is filled with periodic integral curves of the vector field. The center is called {\it elementary} if the
linearization $d\mathcal{X}(O)$ of the vector field is a rotation, otherwise the center is called non-elementary. If the origin is a center of system \eqref{eq1'}, then it is in general non-elementary.  The {\it cyclicity}  is  the total number of limit cycles which can emerge from a configuration of trajectories (center point, period annulus, separatrix cycle) under a perturbation.

The center and its cyclicity problem are only solved for some special kind of planar polynomial systems: quadratic systems, cubic systems with homogeneous nonlinearities, and so on. In the following two theorems, we give some more information on the center and cyclicity problem for some subfamily of the semi-quasi-homogeneous system \eqref{eq1'}.

\begin{theorem}\label{th3} Suppose that $q| p+m-1,\,p| q+n-1$ and the condition (ii.1) in Theorem \ref{mainth} holds.  Consider the  system
\begin{equation}\label{eq3}
\frac{dx}{dt}=\sum_{i=0}^{[r_1/p]}a_{i}x^{iq}y^{r_1-ip},\,\,\,\,\,\frac{dy}{dt}=b_0x^{r_2}
\end{equation}
with $r_1>r_2$ and $m>n$. If $a_0b_0<0$, then the following statements hold.
\begin{itemize}
\item[(i)] The origin is a center if and only if $a_1=a_3=\cdots=a_\kappa=0$, where
\begin{equation*}
\kappa=\left\{\begin{array}{ll}\left[\dfrac{r_1}{p}\right],&\mathrm{if}\,\,\,\left[\dfrac{r_1}{p}\right]\,\,\mathrm{is\,\,odd,}\\[2ex]
\left[\dfrac{r_1}{p}\right]-1,&\mathrm{if}\,\,\,\left[\dfrac{r_1}{p}\right]\,\,\mathrm{is\,\,even.}\end{array}\right.
\end{equation*}
\item[(ii)] If $a_1=a_3=\cdots=a_{2i-3}=0$, $a_{2i-1}\not=0,\,\,2i-1\leq \kappa$, then the origin is a weak focus and under perturbations inside the family it can produce $i-1$ limit cycles.
\item[(iii)] The cyclicity of the center is $(\kappa-1)/2=[([r_1/p]-1)/2]$.
\end{itemize}
\end{theorem}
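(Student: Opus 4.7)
\textbf{Proof plan for Theorem~\ref{th3}.} The plan is to reduce all three parts to the analysis of the Poincar\'e return map at the origin, by combining a reversibility observation with a direct computation of Lyapunov-type coefficients. Throughout I will rely on the parities forced by Theorem~\ref{mainth}(ii.1): $p,q,r_1,r_2$ are all odd.

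For the sufficiency direction of~(i), I will observe that $a_1=a_3=\cdots=a_\kappa=0$ is exactly the statement that $P_m(x,y)$ contains only monomials $a_{2k}x^{2kq}y^{r_1-2kp}$; since $q$ is odd, $(2k)q$ is even, and hence $P_m(-x,y)=P_m(x,y)$. Meanwhile $Q_n(-x,y)=-b_0x^{r_2}=-Q_n(x,y)$ because $r_2$ is odd, so the involution $(x,y,t)\mapsto(-x,y,-t)$ preserves the system and fixes the $y$-axis. A preliminary inspection of the vector field on the four coordinate semi-axes, using $a_0b_0<0$ together with the parities of $r_1$ and $r_2$, will show that orbits rotate monotonically around the origin, so the origin is monodromic; the reversibility then forces it to be a center.

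For the necessity in~(i) and for~(ii), I will pass to quasi-homogeneous polar coordinates $x=\rho^p\cos\theta$, $y=\rho^q\sin\theta$, which transform the orbit equation into
\[
\frac{d\rho}{d\theta}=\frac{\rho\bigl(\rho^{m-n}\cos\theta\,P_m(\cos\theta,\sin\theta)+b_0\sin\theta\cos^{r_2}\theta\bigr)}{-q\rho^{m-n}\sin\theta\,P_m(\cos\theta,\sin\theta)+pb_0\cos^{r_2+1}\theta}.
\]
The hypothesis $m>n$ makes the $b_0\cos^{r_2+1}\theta$ term dominant in the denominator at small $\rho$, except in thin sectors around $\theta=\pm\pi/2$, where I will switch to the $y$-chart $x=\xi(y)$ and match the asymptotics, producing a globally well-defined return map $\Pi(\rho_0)=\rho(2\pi;\rho_0)$. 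Expanding the displacement function $\Pi(\rho_0)-\rho_0$ as a generalized Dulac series, each coefficient $L_k$ will be an angular integral over $[0,2\pi]$ of an explicit polynomial in $\cos\theta$, $\sin\theta$ and $P_m(\cos\theta,\sin\theta)$. The main computational step---and, in my view, the main obstacle of the proof---is to establish the triangular structure
\[
L_k=V_k\,a_{2k-1}+W_k(a_1,a_3,\ldots,a_{2k-3}),\qquad V_k\neq0.
\]
The cancellation of the even-indexed $a_{2j}$ from every $L_k$ is the local counterpart of the global reversibility: under $\theta\mapsto\pi-\theta$ the $a_{2j}$-contributions to each integrand are odd and integrate to zero, while the surviving integrand defining $V_k$ has a definite sign on each of $(-\pi/2,\pi/2)$ and $(\pi/2,3\pi/2)$, yielding $V_k\neq0$. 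Once this triangular structure is in place, if $a_1=\cdots=a_{2i-3}=0$ and $a_{2i-1}\neq0$ then $L_1=\cdots=L_{i-1}=0$ and $L_i\neq0$, so the origin is a weak focus of order exactly $i-1$; this proves the necessity in~(i), and the standard perturbation at a weak focus of order $i-1$ yields the $i-1$ limit cycles in~(ii).

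Part~(iii) will then follow by the standard Bautin-style argument: the triangular dependence of $(L_1,\ldots,L_{(\kappa+1)/2})$ on $(a_1,a_3,\ldots,a_\kappa)$ bounds the cyclicity above by $(\kappa-1)/2$, and the matching lower bound is obtained by starting from a system with $a_1=\cdots=a_{\kappa-2}=0$, $a_\kappa\neq0$ and successively reversing the signs of $a_{\kappa-2},a_{\kappa-4},\ldots,a_1$, each perturbation producing one hyperbolic limit cycle by the familiar inductive construction.
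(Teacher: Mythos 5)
Your architecture---reversibility for the sufficiency in (i), a triangular structure of return-map coefficients for the necessity and for (ii), Bautin-style bookkeeping for (iii)---has the right shape, and the reversibility half coincides with the paper's (which shows $\bar P_m(-x,y)=\bar P_m(x,y)$, $\bar Q_n(-x,y)=-\bar Q_n(x,y)$ and combines this with a monodromy estimate in generalized polar coordinates). But there is a genuine gap at exactly the step you flag as the main obstacle, and it cannot be closed the way you describe. With your weights $(p,q)$, the unperturbed system $\dot x=a_0y^{r_1}$, $\dot y=b_0x^{r_2}$ does \emph{not} have $\rho=\mathrm{const}$ orbits, so the coefficients $L_k$ of the displacement function are not single ``angular integrals of explicit polynomials'': beyond the first order they are iterated integrals depending polynomially and nonlinearly on all the $a_j$. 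Consequently your parity argument---each $a_{2j}$-contribution is odd under $\theta\mapsto\pi-\theta$ and integrates to zero---only kills the terms \emph{linear} in the coefficients; it says nothing about products such as $a_{2j_1}a_{2j_2}$ entering $L_k$ for $k\geq 2$, so the claimed structure $L_k=V_k\,a_{2k-1}+W_k(a_1,\ldots,a_{2k-3})$ with the even-indexed parameters absent is unproved. The correct mechanism is not termwise cancellation but the Bautin-ideal observation that the displacement function vanishes \emph{identically} on the reversible subfamily $\{a_1=a_3=\cdots=0\}$, combined with the weighted grading (here $m>n$ makes the weighted degrees of the perturbing monomials strictly increase with $i$) to isolate the lowest-order odd coefficient; making this rigorous amounts to re-proving Theorem \ref{th9}, which is available in the paper and which you never invoke.

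The paper's proof sidesteps all of this: after the rescaling \eqref{eq14} normalizing $a_0,b_0$, it takes the Lyapunov weights $(l_1,l_2)=((r_1+1)/2,(r_2+1)/2)$---not your $(p,q)$---so that the leading part $-\bar y^{r_1}\partial/\partial\bar x+\bar x^{r_2}\partial/\partial\bar y$ has minimal weighted degree and its closed orbits are exactly $\rho=\mathrm{const}$ in the $(\mathrm{Cs},\mathrm{Sn})$ coordinates \eqref{pc}; it proves monodromy via the bound $G(\rho,\phi)>1/2$ in \eqref{eq16}; it establishes the center for the even subfamily by the same reversibility you use (this is hypothesis (i) of Theorem \ref{th9}); it checks $I_i=-\int_0^T\mathrm{Cs}^{iq+2l_2-1}\phi\,\mathrm{Sn}^{r_1-ip}\phi\,d\phi\neq0$ for odd $i$ via Lemma \ref{l7}, both exponents being even precisely because $p,q,r_1,r_2$ are odd; and then conclusions (a)--(c) of Theorem \ref{th9} yield (i), (ii), (iii) simultaneously. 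Two further repairs your plan needs: your preliminary ``inspection of the four semi-axes'' does not prove monodromy (transversal crossings of the axes do not exclude orbits reaching the origin tangentially inside a quadrant---one needs the polar estimate bounding $d\rho^{-(m-n)/2}/d\phi$, which fortunately your own orbit equation can supply, and in fact with $a_0b_0<0$ the denominator is positive near $\theta=\pm\pi/2$, so your chart-switching is unnecessary); and the least-work fix overall is to keep your reversibility step, switch to the $(l_1,l_2)$-polar coordinates, and replace the termwise parity cancellation by an appeal to Theorem \ref{th9} or by a full Bautin-ideal argument.
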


In the next theorem we study the local phase portraits at the origin for  system \eqref{eq1'} with $m=1$.
 \begin{theorem}\label{th4}
 Suppose that $m=1$ in \eqref{eq1'} and  the condition (ii.1) in Theorem \ref{mainth} holds. Then system \eqref{eq1'} is reduced to
 \begin{equation}\label{eq4}
\frac{dx}{dt}=P_1(x,y)=a_1x+a_0y^p,\,\,\frac{dy}{dt}=Q_n(x,y)=\sum_{j=0}^{n/p}b_jx^{n/p-j}y^{jp}
\end{equation}
where $n/p$ is a odd number and $(P_1(x,y),Q_n(x,y))=1$.
\begin{itemize}
\item[(i)] Suppose that $a_0=0,\,\,a_1\not=0$, then $b_{n/p}\not=0$. Moreover  the origin is a unstable (resp. stable)  node if $a_1>0,\,\,b_{n/p}>0$  (resp.$a_1<0,\,\,b_{n/p}<0$)  and a saddle if  $a_1b_{n/p}<0$ respectively.
\item[(ii)] Suppose  $a_1\not=0,\,a_0\not=0$.
\begin{itemize}
\item[(a)] If $p=1$, then
\begin{itemize}
\item[(a.1)] If $\sum_{j=0}^n(-1)^{n-j}b_j/(a_0^{j-1}a_1^{n-j+1})>0$ and $a_1>0$ (resp. $a_1<0$), then the origin is a unstable (resp. stable)  node .
 \item[(a.2)] If $\sum_{j=0}^n(-1)^{n-j}b_j/(a_0^{j-1}a_1^{n-j+1})<0$, then the origin is a saddle.
\end{itemize}
\item[(b)] If $p\geq 3$, then
\begin{itemize}
\item[(b.1)] If $a_1(\sum_{j=0}^{n/p}b_j(-a_0/a_1)^{n/p-j})>0$ and $a_1>0$ (resp. $a_1<0$), then the origin is a unstable (resp. stable)  node.
 \item[(b.2)] If $a_1(\sum_{j=0}^{n/p}b_j(-a_0/a_1)^{n/p-j})<0$, then the origin is  a saddle.
\end{itemize}
\end{itemize}
\item[(iii)] Suppose   $a_0b_0<0,\,a_1=0,\,\,n>p^2$, then the origin is a center if and only if $b_1=b_3\cdots=b_{n/p}=0$.
\end{itemize}
 \end{theorem}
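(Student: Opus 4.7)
The plan is to handle parts (i) and (ii) by semi-hyperbolic center-manifold analysis, and part (iii) by a symmetry-plus-monodromy argument for sufficiency together with a Poincar\'e--Lyapunov computation for necessity.

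In (i), the condition $a_0=0$ combined with $(P_1,Q_n)=1$ forces $b_{n/p}\ne 0$, since $b_{n/p}y^n$ is the only monomial of $Q_n$ without a factor of $x$. The linearization at the origin is therefore $\mathrm{diag}(a_1,0)$ and the $y$-axis is invariant, so it is the center manifold; on it the reduced flow is $\dot y=b_{n/p}y^n$ with $n$ odd (because both $p$ and $n/p$ are odd). The standard classification of semi-hyperbolic singular points then gives a topological node when the reduced flow and the hyperbolic eigenvalue $a_1$ carry the same stability, and a saddle otherwise, producing the four subcases of (i).

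For (ii), the origin is still semi-hyperbolic since $a_1\ne 0$. When $p=1$ the off-diagonal entry $a_0$ of the linearization is eliminated by $\tilde x=a_1x+a_0y,\ \tilde y=y$; when $p\geq 3$ the linearization is already essentially diagonal. Inserting the ansatz $h(\tilde y)=-(a_0/a_1)\tilde y^p+O(\tilde y^{p+1})$ for the center manifold $\tilde x=h(\tilde y)$ into $\dot{\tilde y}=Q_n(h(\tilde y),\tilde y)$ yields the reduced flow
\begin{equation*}
\dot{\tilde y}=C\,\tilde y^n+O(\tilde y^{n+1}),\qquad C=\sum_{j=0}^{n/p}b_j\bigl(-a_0/a_1\bigr)^{n/p-j},
\end{equation*}
with the obvious analogue when $p=1$. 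Clearing denominators, the scalar appearing in (a.1)--(b.2) differs from $a_1C$ only by the positive factor $a_0^{n-1}a_1^{2}$ (note $n-1$ is even), so the stated sign conditions are exactly $a_1C>0$ (node) and $a_1C<0$ (saddle), and the semi-hyperbolic classification delivers the dichotomy.

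For (iii), $a_1=0$ makes the linearization vanish entirely (since $n>p^2$ ensures $Q_n$ has no linear part either), and the origin is a fully degenerate singularity. \emph{Sufficiency}: if every $b_j$ with odd $j$ vanishes, then $a_0y^p$ is odd in $y$ (as $p$ is odd) and $Q_n$ is even in $y$ (as each surviving $pj$ is even), so the field is reversible under $(x,y,t)\mapsto(x,-y,-t)$. The $(p,1)$-quasi-homogeneous principal part $(a_0y^p,b_0x^{n/p})$ is Hamiltonian with first integral $H=\frac{b_0}{n/p+1}x^{n/p+1}-\frac{a_0}{p+1}y^{p+1}$, whose level sets near the origin are closed curves because $a_0b_0<0$ and both $p+1,\ n/p+1$ are even; a quasi-homogeneous blow-up then shows monodromicity, and reversibility closes this monodromy into a center. \emph{Necessity}: introduce generalized polar coordinates adapted to $H$, with generalized trigonometric functions $\mathrm{Cs}(\theta),\,\mathrm{Sn}(\theta)$ parametrizing the unit level curve, so that the principal flow becomes $\dot r\equiv 0$, and expand the Poincar\'e return map $r_0\mapsto r_0+\sum V_kr_0^{\alpha_k}+\cdots$. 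By the parity of $\mathrm{Cs}$ and $\mathrm{Sn}$, the even-$j$ monomials contribute zero to every $V_k$, while the first surviving odd-$j$ coefficient $b_{2k+1}$ produces a nonzero $V_k$ proportional to $b_{2k+1}$ times an Eulerian beta-type integral; induction on the odd index then forces each odd-indexed $b_j$ to vanish, with $n>p^2$ being precisely the inequality required to prevent resonances between successive Lyapunov contributions. The main obstacle lies here: rigorously setting up the generalized polar coordinates at a totally degenerate singular point, verifying monodromicity, and showing that each $V_k$ is a nonzero multiple of the corresponding $b_{2k+1}$.
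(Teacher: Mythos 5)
Your parts (i) and (ii) are correct and take essentially the paper's route. The paper simply invokes Theorem \ref{th8} (the semi-hyperbolic classification), and your center-manifold reduction $x=h(y)=-(a_0/a_1)y^p+O(y^{p+1})$ with reduced flow $\dot y=Cy^n+\cdots$, $C=\sum_{j=0}^{n/p}b_j(-a_0/a_1)^{n/p-j}$, is exactly the computation of the quantity $A(x,f(x))$ appearing in that theorem; your sign bookkeeping for $p=1$ is also right, since $a_1C=a_0^{n-1}a_1^{2}\sum_{j=0}^n(-1)^{n-j}b_j/(a_0^{j-1}a_1^{n-j+1})$ with $n-1$ even, and your linear change plays the role of the paper's transformation leading to \eqref{eq18} (note $n\geq 3$ is odd, so $Q_n$ has no linear part when $p=1$, and when $n/p=1$ the term $b_0x$ in $\dot y$ is harmless for the invariant-curve computation).

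Part (iii), however, contains a genuine gap, which you flagged yourself. Sufficiency via the reversibility $(x,y,t)\mapsto(x,-y,-t)$ together with monodromicity is sound and is the same mechanism the paper uses. But the necessity direction --- that each return-map coefficient $V_k$ is a nonzero multiple of $b_{2k+1}$ --- is precisely the content of the cyclicity theorem of \cite{CGM1} (Theorem \ref{th9}), and your proposal sketches its proof rather than proving or citing it. What is missing concretely is the nondegeneracy condition $I_j=\iint \mathrm{div}\,Z_j\,dx\,dy\not=0$ for the odd-index quasi-homogeneous perturbations $Z_j=b_jx^{n/p-j}y^{jp}\,\partial/\partial y$, whose divergence is $jp\,b_jx^{n/p-j}y^{jp-1}$: by Lemma \ref{l7} this reduces to the parity check that $n/p-j$ and $jp-1$ are simultaneously even exactly when $j$ is odd, a computation you assert (``Eulerian beta-type integral'') but never carry out. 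The paper avoids all of this with a one-line reduction you missed: interchanging $x$ and $y$ turns \eqref{eq4} with $a_1=0$ into a system of the form \eqref{eq3} with the roles of the $a_i$ and $b_j$ swapped, so (iii) is literally Theorem \ref{th3} in disguise. Under this swap the hypothesis $n>p^2$ becomes $r_1>r_2$ (equivalently $n/p>p$), i.e.\ it is the normal-form inequality $n=2l_1-1\geq m=2l_2-1$ required by Theorem \ref{th9}; your gloss that it ``prevents resonances between successive Lyapunov contributions'' is inaccurate, since the weighted degree of $Z_j$ exceeds that of the Hamiltonian principal part $(a_0y^p,b_0x^{n/p})$ by $j(n-1)/2>0$ for every $n$, independently of whether $n>p^2$.
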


Finally, we study the cyclicity at infinity for a kind of dual version of the above vector field. Before that  we introduce the $(l_1,l_2)$-trigonometric function  which were given by Lyapunov \cite{L} in his study of the stability of degenerate singular points. Let $z(\phi)=\mathrm{Cs}\phi$ and $\omega(\phi)=\mathrm{Sn}\phi$ be the solution of the following initial problem
\begin{equation*}
\dot z=-\omega^{2l_1-1},\,\,\dot\omega=z^{2l_2-1},\,\,z(0)=l_1^{-\frac{1}{2l_2}},\,\,\omega(0)=0.
\end{equation*}
For $(l_1,l_2)=(1,1)$, we have that ${\mathrm Cs}\phi=\cos
\phi,\,\,{\mathrm Sn}\phi=\sin\phi$, i.e. the $(1,1)$-trigonometric
functions are the classical ones.

Consider the system
 \begin{equation}\label{eq5}
\frac{dx}{dt}=-y^{r_1},\,\,\frac{dy}{dt}=x^{r_2}+\sum_{j=1}^{[r_2/q]}b_jx^{r_2-jq}y^{jp},
\end{equation}
where $r_1=2l_1-1$ and $r_2=2l_2-1$ are defined in Theorem \ref{mainth}.

In order to study the behavior of the trajectories of a planar differential system near infinity it is possible to use a compactification. In this paper we prefer to reduce the ``infinity'' to a
single point. The construction of this compactification is as follows (see for instance \cite{CGM1}): we first write system \eqref{eq5} in the generalized polar coordinate $(\rho,\phi)$ by the transformation
\begin{equation}\label{pc}x=\rho^{l_1}\mathrm{Cs}\phi,\,\,\,y=\rho^{l_2}\mathrm{Sn}\phi,
\end{equation}
then change $\rho$ by $1/\rho$ and finally reparametrize the system to obtain  a smooth one.

It is said that a planar differential equation has a center at infinity if there is a compact set of $\mathbb{R}^2$ such that outside it all trajectories of that differential equations are closed\cite{CGM1}.

\begin{theorem}\label{th5}
 Suppose that  $q| p+m-1,\,p| q+n-1,\,r_1>r_2,\,m>n$ and the condition (ii.1) in Theorem \ref{mainth} holds.  For system \eqref{eq5}, the following statements hold.
\begin{itemize}
\item[(i)] A neighborhood of infinity is filled with periodic orbits   if and only if $b_1=b_3=\cdots=b_\mu=0$, where
\begin{equation*}
\mu=\left\{\begin{array}{ll}\left[\dfrac{r_2}{q}\right],&\mathrm{if}\,\,\,\left[\dfrac{r_2}{q}\right]\,\,\mathrm{is\,\,odd,}\\[2ex]
\left[\dfrac{r_2}{q}\right]-1,&\mathrm{if}\,\,\,\left[\dfrac{r_2}{q}\right]\,\,\mathrm{is\,\,even.}\end{array}\right.
\end{equation*}
\item[(ii)] If $b_1=b_3=\cdots=b_{2i-3}=0$, $b_{2i-1}\not=0,\,\,2i-1\leq \mu$, then the stability of the infinity is given by the sign of $b_{2i-1}$. Moreover the maximum number number of periodic orbits that bifurcate from infinity   inside the family  for small perturbations is  $i-1$.
\item[(iii)] The cyclicity of the infinity  is $(\mu-1)/2=[([r_2/q]-1)/2]$.
\end{itemize}
\end{theorem}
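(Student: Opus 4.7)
The plan is to convert the question about infinity for \eqref{eq5} into a center/cyclicity question at the origin of a smooth scalar equation $dR/d\phi=f(R,\phi)$, using the $(l_1,l_2)$-generalized polar compactification described right before the statement, and then to run the same parity / reversibility argument that underlies Theorem \ref{th3}. Substituting $x=\rho^{l_1}\mathrm{Cs}\phi$, $y=\rho^{l_2}\mathrm{Sn}\phi$ into \eqref{eq5}, using the Lyapunov identities $d\mathrm{Cs}/d\phi=-\mathrm{Sn}^{2l_1-1}$, $d\mathrm{Sn}/d\phi=\mathrm{Cs}^{2l_2-1}$ together with the first integral $l_1\mathrm{Cs}^{2l_2}+l_2\mathrm{Sn}^{2l_1}\equiv 1$ (which plays the role of the Jacobian), and solving by Cramer's rule, I obtain
\[
\frac{d\rho}{d\phi}=\rho\cdot\frac{\sum_{j=1}^{[r_2/q]}b_j\,\rho^{-jk}\,\mathrm{Cs}^{r_2-jq}\phi\,\mathrm{Sn}^{jp+r_1}\phi}{1+l_1\sum_{j=1}^{[r_2/q]}b_j\,\rho^{-jk}\,\mathrm{Cs}^{r_2-jq+1}\phi\,\mathrm{Sn}^{jp}\phi},\qquad k=\tfrac{m-n}{2}>0,
\]
where $qr_1=p+m-1$ and $pr_2=q+n-1$ give $l_2 p-l_1 q=-k$. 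The change $R=1/\rho$ then sends infinity to the origin of an equation analytic in $R$ at $R=0$.

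For part (i), the ``if'' direction is obtained by symmetry: when $b_1=b_3=\cdots=b_\mu=0$, the vector field \eqref{eq5} is invariant under the reversing involution $(x,y,t)\mapsto(x,-y,-t)$, because each surviving monomial has exponent $jp$ in $y$ with $j$ even and $p$ odd, hence even. Since the unperturbed system is Hamiltonian with the positive-definite (because $r_1,r_2$ are odd) first integral $x^{r_2+1}/(r_2+1)+y^{r_1+1}/(r_1+1)$ whose level sets foliate the plane by closed ovals, infinity is monodromic, and together with the reversing symmetry this forces infinity to be a center. For the ``only if'' direction I expand the first-return map of the scalar equation around $R=0$ and compute, modulo terms quadratic in the $b_j$'s,
\[
P(R_0)-R_0=\sum_{j=1}^{[r_2/q]}V_j\,b_j\,R_0^{jk+1}+\cdots,\qquad V_j=-\int_{0}^{T}\mathrm{Cs}^{r_2-jq}\phi\,\mathrm{Sn}^{jp+r_1}\phi\,d\phi,
\]
where $T$ is the period of the generalized trigonometric functions. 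Because $\mathrm{Cs}$ is even and $\mathrm{Sn}$ is odd in $\phi$, the integrand has the parity of $jp+r_1\equiv j+1\pmod 2$, and hence $V_j\neq 0$ exactly when $j$ is odd. A first nonvanishing odd-indexed $b_{2i-1}$ therefore produces a genuinely nonzero leading coefficient $V_{2i-1}b_{2i-1}R_0^{(2i-1)k+1}$ in $P-\mathrm{Id}$, which rules out the center and simultaneously proves the stability claim in (ii) with sign $\mathrm{sgn}(V_{2i-1}b_{2i-1})$.

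For part (iii), the sharp cyclicity bound follows from a Bautin-type argument on the displacement $P(R_0)-R_0$. The relevant exponents $\{(2j-1)k+1\,:\,1\leq 2j-1\leq\mu\}$ are strictly increasing in $j$, so the first-order Melnikov monomials form a Chebyshev-like system on a neighborhood of $R_0=0$. Starting from $b_1=\cdots=b_\mu=0$, I successively sign-reverse $b_\mu,b_{\mu-2},\ldots,b_1$ with sufficiently decreasing magnitudes and apply Rolle's theorem to produce one new simple zero of $P-\mathrm{Id}$ at each stage. Since the last odd-indexed coefficient is consumed by the stability at infinity, I obtain exactly $(\mu+1)/2-1=(\mu-1)/2=[([r_2/q]-1)/2]$ small-amplitude limit cycles, matching the upper bound given by the vanishing order of $P-\mathrm{Id}$.

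The main technical obstacle is to confirm that each Melnikov coefficient $V_{2i-1}$ really is nonzero (no hidden cancellation in the generalized-trigonometric integrals) and that the linear-in-$b$ pieces $R_0^{(2j-1)k+1}$ remain the dominant terms of $P-\mathrm{Id}$ after the higher-order corrections in the $b_j$'s are taken into account. Both points follow from the definite-sign structure of $\mathrm{Cs}^{r_2-jq}(\phi)\mathrm{Sn}^{jp+r_1}(\phi)$ on the half-period $(0,T/2)$ combined with the strict monotonicity of the exponents $jk+1$ in $j$, which is the same combinatorics that underlies the proof of Theorem \ref{th3}.
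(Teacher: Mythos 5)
Your computations are sound and they are exactly the hypothesis-verifications the paper needs, but you should know that the paper's own proof of Theorem \ref{th5} is a one-liner: it verifies (as in the proof of Theorem \ref{th3}) that system \eqref{eq5} fits the framework of Remark \ref{r10} --- the ``dual at infinity'' of Theorem \ref{th9} from \cite{CGM1} --- and then quotes that result for all three statements. Concretely, the paper takes $Y$ to be the sum of the even-index monomial fields $b_jx^{r_2-jq}y^{jp}\partial/\partial y$ (a center at infinity by the same reversibility you invoke), takes $Z_j$ to be the odd-index ones, checks that their $(l_1,l_2)$-weighted degrees $2l_1l_2-l_1-l_2+1-j(m-n)/2$ are strictly decreasing and below the degree $k=2l_1l_2-l_1-l_2+1$ of the principal part $-y^{r_1}\partial_x+x^{r_2}\partial_y$ (this is where $m>n$ enters, and it matches your bookkeeping $l_2p-l_1q=-(m-n)/2$), and computes $I_j=\iint\mathrm{div}\,Z_j$, which by Green's theorem reduces to your $V_j$ and is nonzero for odd $j$ by Lemma \ref{l7}. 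Your scalar equation $d\rho/d\phi$, the Melnikov coefficients $V_j$, the parity analysis, and the symmetry argument for the ``if'' direction of (i) are all correct and coincide with this verification; note only that Lemma \ref{l7} requires \emph{both} exponents to be even, so you must also check $r_2-jq\equiv 0\pmod 2$ for odd $j$ (true, since $r_2$ and $q$ are odd under (ii.1)) --- your stated parity check on $jp+r_1$ alone only gives vanishing for even $j$, not nonvanishing for odd $j$.

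The one place where your argument is genuinely thinner than what it replaces is the cyclicity \emph{upper} bound in (ii)--(iii). Your Rolle/sign-reversal scheme correctly produces $(\mu-1)/2$ limit cycles (the lower bound), and your induction showing that, when $b_1=\cdots=b_{2i-3}=0$, the coefficient of $R_0^{(2i-1)k+1}$ in $P(R_0)-R_0$ is exactly $V_{2i-1}b_{2i-1}$ (every competing product $b_{j_1}\cdots b_{j_s}$ with $j_1+\cdots+j_s=2i-1$, $s\geq 2$, contains a vanishing odd-index factor) is fine and settles (i) and the stability claim in (ii). But the assertion that the cyclicity is bounded above ``by the vanishing order of $P-\mathrm{Id}$'' does not follow from sign-definite integrands and monotone exponents: those control only the part of the displacement that is linear in $b$, whereas a cyclicity bound uniform in $b$ requires showing that \emph{every} coefficient of the displacement series lies in the ideal generated by the $(\mu+1)/2$ essential coefficients $V_{2i-1}b_{2i-1}$ (a Bautin-ideal/division argument). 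That is precisely the nontrivial content of Theorem \ref{th9} and Remark \ref{r10}, which the paper cites rather than reproves; as written, your sketch asserts this step rather than establishing it. So: either cite Remark \ref{r10} at that point (in which case your computations are exactly the needed hypotheses, and your proof becomes the paper's proof), or supply the Bautin-type division argument explicitly.
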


Theorem \ref{th3} and  Theorem \ref{th5} generalize those for semi-homogeneous polynomial vector fields in \cite{CGM2}.

\section{Some preliminary results }\label{s2}

In this section we give some preliminary results which will be used in the proof of the main results. First of all we prove Lemma \ref{l1}.
\begin{proof}[Proof of Lemma \ref{l1}]
Let $p=kp',\,q=kq'$. Since $(p,q) = k \geq 2$, we have  $(p',q')=1$.  It follows from the definition of  $P_m(x,y)$ in \eqref{eq1} that $k p'- 1 + m = k p' i + k q'j$, which implies  that $k|m-1$. Using the same arguments one obtains $k|n-1$. Taking $m'=1+(m-1)/k$ and $n'=1+(n-1)/k$,  the  assertion follows.
\end{proof}

\begin{lemma}\label{l6}\cite{CGM2,L} ${\mathrm Cs}\phi$ and ${\mathrm Sn}\phi$, defined in Section \ref{s1},  have the following properties.
\begin{itemize}
\item[(i)] $l_1{\mathrm Cs}^{2l_2}\phi+l_2{\mathrm Sn}^{2l_1}\phi=1$.
\item[(ii)] $d{\mathrm Sn}\phi/d\phi={\mathrm Cs}^{2l_2-1}\phi$,  $d{\mathrm Cs}\phi/d\phi=-{\mathrm Sn}^{2l_1-1}\phi$.
\item[(iii)] ${\mathrm Cs}(-\phi)={\mathrm Cs}\phi$, ${\mathrm Sn}(-\phi)=-{\mathrm Sn}\phi$.
\item[(iv)] ${\mathrm Cs}\phi$ and ${\mathrm Sn}\phi$ are
$T$-periodic functions with
\begin{equation*}
T=T_{l_1,l_2}=2l_1^{-\frac{1}{2l_2}}l_2^{-\frac{1}{2l_1}}\frac{\Gamma(\frac{1}{2l_1})\Gamma(\frac{1}{2l_2})}{\Gamma(\frac{1}{2l_1}+\frac{1}{2l_2})},
\end{equation*}
where $\Gamma$ denotes the gamma function.
\end{itemize}
\end{lemma}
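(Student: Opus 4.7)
The plan is to verify items (ii), (i), (iii), (iv) in that order, using only the defining initial value problem, ODE uniqueness, a conservation law, a reflection symmetry, and a Beta-integral evaluation. Item (ii) is a tautology: by definition $\mathrm{Cs}\phi=z(\phi)$ and $\mathrm{Sn}\phi=\omega(\phi)$, so the two ODEs $\dot z=-\omega^{2l_1-1}$ and $\dot\omega=z^{2l_2-1}$ are precisely the derivative formulas claimed.

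For (i), I would set $H(\phi):=l_1\mathrm{Cs}^{2l_2}\phi+l_2\mathrm{Sn}^{2l_1}\phi$ and differentiate using (ii):
\begin{equation*}
H'(\phi)=2l_1l_2\bigl(\mathrm{Cs}^{2l_2-1}\phi\cdot(-\mathrm{Sn}^{2l_1-1}\phi)+\mathrm{Sn}^{2l_1-1}\phi\cdot\mathrm{Cs}^{2l_2-1}\phi\bigr)=0.
\end{equation*}
Hence $H$ is constant, and the initial data gives $H(0)=l_1\cdot l_1^{-1}+0=1$. For (iii), define $\tilde z(\phi):=z(-\phi)$ and $\tilde\omega(\phi):=-\omega(-\phi)$; since $2l_1-1$ is odd, $(\tilde z,\tilde\omega)$ satisfies the same IVP as $(z,\omega)$, so by uniqueness $\tilde z=z$ and $\tilde\omega=\omega$, which is exactly the claimed parity.

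For (iv), by (i) the orbit lies on the compact regular level curve $\mathcal{C}:=\{l_1z^{2l_2}+l_2\omega^{2l_1}=1\}$, and the vector field has no zero on $\mathcal{C}$ (its only zero is the origin, which is off $\mathcal{C}$); hence the orbit is a nonstationary periodic loop of some minimal period $T$. The symmetry (iii), together with the analogous involution $(z,\omega,t)\mapsto(-z,\omega,-t)$ (valid because $2l_2-1$ is odd), implies that each of the four quadrantal arcs of $\mathcal{C}$ is traversed in equal time $T/4$. Restricting to the first quadrant and solving the conservation law for $\omega$, the relation $d\phi/dz=-\omega^{-(2l_1-1)}$ yields
\begin{equation*}
\tfrac{T}{4}=l_2^{(2l_1-1)/(2l_1)}\int_0^{l_1^{-1/(2l_2)}}(1-l_1z^{2l_2})^{-(2l_1-1)/(2l_1)}\,dz.
\end{equation*}
The substitution $u=l_1z^{2l_2}$ converts this integral into $\tfrac12\,l_1^{-1/(2l_2)}l_2^{-1/(2l_1)}B\bigl(\tfrac{1}{2l_2},\tfrac{1}{2l_1}\bigr)$, after which the identity $B(x,y)=\Gamma(x)\Gamma(y)/\Gamma(x+y)$ delivers the stated formula for $T$. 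The main obstacle is the exponent book-keeping in this Beta-function substitution; the first three items reduce to unwinding a definition, verifying a first integral, and applying ODE uniqueness.
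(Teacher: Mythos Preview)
The paper does not actually prove this lemma: it is stated with a citation to \cite{CGM2,L} and used as a black box, so there is no ``paper's own proof'' to compare against. Your proposal is therefore doing strictly more than the paper does, and the arguments you give are the standard ones.

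Your derivations of (i)--(iii) are correct. Item (ii) is indeed just the definition; the first-integral computation in (i) and the uniqueness argument for the parity in (iii) are fine (the right-hand side is polynomial, hence locally Lipschitz, so uniqueness applies). For (iv), the compact level curve argument together with the two reflection symmetries does give the equal quarter-period decomposition, and your Beta-integral computation checks out: with $u=l_1z^{2l_2}$ the Jacobian contributes a factor $(2l_2)^{-1}l_1^{-1/(2l_2)}u^{1/(2l_2)-1}$, and combining with the prefactor $l_2^{(2l_1-1)/(2l_1)}$ yields $\tfrac12 l_1^{-1/(2l_2)}l_2^{-1/(2l_1)}B(1/(2l_2),1/(2l_1))$ as claimed. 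The only point you might tighten is the justification that the first quadrantal arc runs exactly from $\phi=0$ to $\phi=T/4$; this follows once you note that the second symmetry $(z,\omega,t)\mapsto(-z,\omega,-t)$, applied at the time $\phi_0$ where $z$ first vanishes, forces $T=2\phi_0+2\phi_0$ by matching with the original orbit, but as written this step is somewhat compressed.
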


\begin{lemma}\label{l7}\cite{CGM2} $\int_0^T\mathrm{Sn}^\alpha\phi\mathrm{Cs}^\beta\phi d\varphi\not=0$ if and only if $\alpha$ and $\beta$ are even.
\end{lemma}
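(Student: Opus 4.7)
Denote $I(\alpha,\beta):=\int_0^T \mathrm{Sn}^\alpha\phi\,\mathrm{Cs}^\beta\phi\,d\phi$. The ``if'' direction is the easy one. When $\alpha,\beta$ are both even, $\mathrm{Sn}^\alpha\phi\,\mathrm{Cs}^\beta\phi$ is continuous and non-negative, and the identity $l_1\mathrm{Cs}^{2l_2}\phi+l_2\mathrm{Sn}^{2l_1}\phi=1$ from Lemma \ref{l6}(i) rules out simultaneous vanishing of $\mathrm{Cs}$ and $\mathrm{Sn}$. Thus the integrand is strictly positive except at the isolated zeros of $\mathrm{Sn}$ or $\mathrm{Cs}$, and $I(\alpha,\beta)>0$.

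For the ``only if'' direction I would exploit two symmetries of the generalized trigonometric functions that produce sign changes separately in the two exponents. The first is the reflection $\phi\mapsto -\phi$: by Lemma \ref{l6}(iii) together with $T$-periodicity, this change of variable gives $I(\alpha,\beta)=(-1)^\alpha I(\alpha,\beta)$, killing the integral whenever $\alpha$ is odd. The second is the half-period translation $\phi\mapsto\phi+T/2$, which I claim satisfies the antiperiodicity relations $\mathrm{Cs}(\phi+T/2)=-\mathrm{Cs}\phi$ and $\mathrm{Sn}(\phi+T/2)=-\mathrm{Sn}\phi$. Granted this, combining with Lemma \ref{l6}(iii) yields $\mathrm{Cs}(T/2-\phi)=-\mathrm{Cs}\phi$ and $\mathrm{Sn}(T/2-\phi)=\mathrm{Sn}\phi$, and the substitution $\phi\mapsto T/2-\phi$ in $I$ gives $I(\alpha,\beta)=(-1)^\beta I(\alpha,\beta)$, killing the integral whenever $\beta$ is odd. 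All three odd/even cases are then covered.

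The one nontrivial step is the claimed half-period antiperiodicity, which I would derive from the defining ODE $\dot z=-\omega^{2l_1-1}$, $\dot\omega=z^{2l_2-1}$. Since both exponents $2l_i-1$ are odd, the system is invariant under the antipodal involution $(z,\omega)\mapsto(-z,-\omega)$; hence $\phi\mapsto(-z(\phi),-\omega(\phi))$ is also a solution, issued from the antipodal point $(-l_1^{-1/(2l_2)},0)$. Let $\phi^*\in(0,T)$ denote the time at which the original trajectory first reaches $(-l_1^{-1/(2l_2)},0)$; by uniqueness of solutions, $(-z(\phi),-\omega(\phi))=(z(\phi+\phi^*),\omega(\phi+\phi^*))$. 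Iterating shows that $2\phi^*$ is a period, and minimality of $T$ forces $\phi^*=T/2$, which is precisely the antiperiodicity. This last identification via the compact closed-orbit topology of the level curve $l_1 z^{2l_2}+l_2\omega^{2l_1}=1$ is the main obstacle of the argument; once it is in hand, the lemma reduces to routine change-of-variable manipulations in $I(\alpha,\beta)$.
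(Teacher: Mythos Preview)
The paper does not prove Lemma~\ref{l7}; it is quoted verbatim from \cite{CGM2} and used as a black box, so there is no ``paper's own proof'' to compare against. That said, your argument is correct and is essentially the standard one: the reflection $\phi\mapsto -\phi$ handles odd $\alpha$, and the half-period antiperiodicity (derived from the odd symmetry $(z,\omega)\mapsto(-z,-\omega)$ of the defining ODE together with uniqueness on the closed level curve $l_1z^{2l_2}+l_2\omega^{2l_1}=1$) handles odd $\beta$. The only point worth tightening slightly is the existence of the hitting time $\phi^*\in(0,T)$: you rely implicitly on the fact that the periodic orbit traverses the \emph{entire} level curve, which follows because the level set is a topological circle with no equilibria of the flow on it; once that is made explicit, the deduction $2\phi^*=T$ from minimality of $T$ is clean.
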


\medskip
Finally, we give two theorems which characterize the local phase portraits at  the singular points.

\begin{theorem}\label{th8}\cite{DLA} Let $(0,0)$ be an isolated singular point of the vector field $X$ given by
\begin{equation}
\dot x=A(x,y),\,\,\,\,\dot y=\lambda y+B(x,y),
\end{equation}
where $A(x,y)$ and $B(x,y)$ are analytic in a neighborhood  of the origin with $A(0,0)=B(0,0)=DA(0,0)=DB(0,0)=0$ and $\lambda>0$. Let $y=f(x)$ be the solution of the equation
\begin{equation*}
\lambda y+B(x,y)=0
\end{equation*}
in a neighborhood of the origin, and suppose the function $A(x,f(x))$ has the expression
\begin{equation*}
A(x,f(x))=a_mx^m+o(x^m),
\end{equation*}
where $m\geq 2$ and $a_m\not=0$. Then there always exists an invariant analytic curve, called the strong unstable manifold, tangent at $(0,0)$ to the $y$-axis, on which $X$ is analytically conjugate to $\dot y=\lambda y$; it represents repelling behavior since $\lambda>0$. Moreover the following statements hold:
\begin{itemize}
\item[(i)] If $m$ is odd and $a_m<0$, then $(0,0)$ is a topological saddle.
\item[(ii)] If $m$ is odd and $a_m>0$, then $(0,0)$ is a unstable topological  node.
\item[(iii)] If $m$ is even, then $(0,0)$ is a saddle-node, that is, a singular point whose neighborhood is the union of one parabolic and two hyperbolic sectors.
\end{itemize}
\end{theorem}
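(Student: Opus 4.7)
The plan is to treat this as the standard analysis of a semi-hyperbolic singular point with eigenvalues $0$ and $\lambda>0$. The argument splits into three steps: (a) produce the two relevant invariant analytic curves through the origin, (b) straighten the slow one to obtain a simple normal form, and (c) read off the sector decomposition from the signs of the leading terms on each boundary.

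First I would apply the analytic stable/unstable manifold theorem for semi-hyperbolic fixed points to obtain the strong unstable manifold $W^u$, an analytic invariant curve tangent to the $y$-axis at $(0,0)$. On $W^u$ the restricted flow is one-dimensional with single nonzero multiplier $\lambda>0$, so the Poincar\'e--Koenigs linearization theorem applies---no resonances are possible in one direction with one eigenvalue---yielding the analytic conjugacy to $\dot y=\lambda y$ claimed in the statement. Simultaneously, since $\partial_y(\lambda y+B(x,y))|_{(0,0)}=\lambda\neq 0$, the implicit function theorem provides an analytic solution $y=f(x)=O(x^2)$ of $\lambda y+B(x,y)=0$, and formal differentiation of this relation shows that $\{y=f(x)\}$ agrees to all orders with any analytic center manifold through the origin. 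On this slow curve one has $\dot x|_{y=f(x)}=A(x,f(x))=a_m x^m+o(x^m)$ by hypothesis.

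Next I would perform the analytic change of coordinates $\tilde y:=y-f(x)$, which straightens the slow curve to $\{\tilde y=0\}$. A direct substitution puts the system in the form
\begin{equation*}
\dot x = a_m x^m + o(x^m) + \tilde y\, R(x,\tilde y), \qquad \dot{\tilde y} = \lambda\tilde y\bigl(1+S(x,\tilde y)\bigr),
\end{equation*}
with $R,S$ analytic and $S(0,0)=0$. Both $\{\tilde y=0\}$ and $W^u$ are invariant analytic branches meeting transversally at the origin, so they partition a small neighborhood into four regions whose topological type is controlled by the signs of the dominant terms on each boundary. If $m$ is odd with $a_m>0$, then $x\dot x>0$ along $\{\tilde y=0\}$ while $\tilde y$ expands, so every orbit leaves any small neighborhood in forward time and the origin is an unstable topological node. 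If $m$ is odd with $a_m<0$, then $\{\tilde y=0\}$ is attracting under the $x$-flow whereas $\tilde y$ expands; each of the four sectors is hyperbolic and the overall picture is a topological saddle with separatrices $\{\tilde y=0\}$ and $W^u$. If $m$ is even, then $\dot x$ on $\{\tilde y=0\}$ has the same sign on both sides of $0$, so one half of the slow curve is attracting and the other repelling; combined with the expansion in $\tilde y$ this yields two hyperbolic sectors on the repelling side and one parabolic sector on the attracting side, i.e.\ a saddle-node.

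The main obstacle is to upgrade this heuristic picture to a rigorous sector decomposition, most notably in the saddle-node case where the parabolic sector is invisible to the linear part. I would handle this by constructing thin trapping wedges bounded by arcs of $\{\tilde y=0\}$, $W^u$, and short transverse segments $\{x=\pm\varepsilon\}$, $\{\tilde y=\pm\varepsilon\}$, and verifying via the dominant terms $a_m x^m$ and $\lambda\tilde y$ that the vector field points inward on the lateral boundaries of each wedge; a standard Bendixson normal-sector analysis then identifies each sector as hyperbolic or parabolic and assembles the global phase portrait. The analytic conjugacy on $W^u$, by contrast, is essentially immediate from non-resonance, and the parity case work reduces to bookkeeping once the normal form above is in place.
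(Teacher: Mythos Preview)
Your sketch is the standard center-manifold/normal-sector argument and is essentially correct as a proof outline. However, there is nothing to compare it against in this paper: Theorem~\ref{th8} is quoted without proof from the cited reference~\cite{DLA}, where the full argument along precisely these lines (straightening the slow curve, then a Bendixson sector analysis governed by the sign and parity of the leading term on the center manifold) is carried out.
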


We say that a family of vector fields $X_\lambda,\,\,\lambda\in \mathbb{R}^k$ is {\it stable by  homoteces} if all $\lambda\in \mathbb{R}^k$ and  for all $\epsilon\in \mathbb{R}$ there exists $\lambda'\in \mathbb{R}^k$ such that $\epsilon X_\lambda=X_{\lambda'}$.  Fixed $p,q$, any analytic vector field $X$, such that $X(0)=0$, can be decomposed as
\begin{equation*}
X=\sum_{i=1}^\infty h_i^{(p,q)}(X),
\end{equation*}
where $h_i^{(p,q)}(X)$ is $(p,q)$-quasi-homogeneous vector fields of weighted degree $i$\cite{CGM1}.
\begin{theorem}\label{th9}\cite{CGM1}Consider the vector field
\begin{equation}\label{eq7}
X=-y^n\frac{\partial}{\partial x}+x^m\frac{\partial}{\partial y}+Y(x,y,c)+\sum_{i=1}^lb_iZ_i(x,y),
\end{equation}
where $n=2p-1\geq m=2q-1$, are natural numbers, $c\in \mathbb{R}^u,\,\,u\in \mathbb{R}^\mathbb{N}$, and $b=(b_1,b_2,\cdots,b_l)\in\mathbb{R}^l,\,\,l\in\mathbb{N}$, and
\begin{itemize}
\item[(i)] $Y(x,y,c)=Y_1(x,y,c)(\partial/\partial x)+Y_2(x,y,c)(\partial/\partial y)$ has analytic components, $Y(0,0,c)=0$, $h_i^{(p,q)}(Y(x,y,c))=0$ for all $i\leq k=2pq-p-q+1$ (the weighted degree of $-y^n(\partial/\partial x)+x^m(\partial/\partial y)$) and the differential equations associated $b=0$ has a center at the origin.
\item[(ii)] $Z_i$ are $(p,q)$-quasi-homogeneous vector fields of weighted degree $d_i$, greater than $k=2pq-p-q+1$ and $d_i>d_j$ when $i>j$. Furthermore
\begin{equation*}
I_i=\iint_{px^{2q}+qy^{2p}\leq 1}\mathrm{div} Z_i(x,y)dxdy\not=0
\end{equation*}
for all $i$.
\end{itemize}
 Then for the differential equations associated to \eqref{eq7} the following hold.
\begin{itemize}
\item[(a)] The origin is a center if and only if $b=0$.
\item[(b)] If $b_1=b_2=\cdots=b_{i-1}=0$ and $b_i\not=0$ the stability of the origin is given by the sign $b_iI_i$ and its cyclicity inside the family \eqref{eq7} is $i-1$.
\item[(c)] The cyclicity of the center inside the family \eqref{eq7} is $l-1$.
\item[(d)] If in addition the family $Y(x,y,c)$ is stable by homoteces, then the maximum number of limit cycles that can bifurcate for $\epsilon$ small enough, from the curve $px^{2q}+qy^{2p}=h$ for the vector field
    \begin{equation*}
    -y^n\frac{\partial}{\partial x}+x^m\frac{\partial}{\partial y}+\epsilon\left(Y(x,y,c)+\sum_{i=1}^lb_iZ_i(x,y)\right),
    \end{equation*}
    is $l-1$. Moreover there  exist $b\in \mathbb{R}^l$ and $c\in\mathbb{R}^u$ such that this number is attained.
\end{itemize}
\end{theorem}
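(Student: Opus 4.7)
The plan is to reduce everything to an analysis of the Poincar\'e displacement function in generalized polar coordinates adapted to the quasi-homogeneous Hamiltonian $H(x,y)=x^{2q}/(2q)+y^{2p}/(2p)$ of the unperturbed part $-y^n\partial_x+x^m\partial_y$, whose level curves are exactly $px^{2q}+qy^{2p}=\mathrm{const}$, i.e. the curves appearing in hypothesis (ii).

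First I would perform the change $x=\rho^p\mathrm{Cs}\phi$, $y=\rho^q\mathrm{Sn}\phi$ from Lemma \ref{l6} with $(l_1,l_2)=(p,q)$. The identity $p\mathrm{Cs}^{2q}\phi+q\mathrm{Sn}^{2p}\phi=1$ yields $2pq\,H=\rho^{2pq}$, so the unperturbed orbits become the circles $\rho=\mathrm{const}$; a direct calculation using Lemma \ref{l6}(ii) shows that the Jacobian equals $\rho^{p+q-1}$ and that the unperturbed flow satisfies $\dot\rho=0$, $\dot\phi=\rho^{k-1}$ with $k=2pq-p-q+1$. Inverting the Jacobian for the full vector field and dividing by $\dot\phi$, the fact that each $Z_i$ is $(p,q)$-quasi-homogeneous of weighted degree $d_i$ (so that $Z_{i,1}\mapsto\rho^{p-1+d_i}Z_{i,1}(\mathrm{Cs}\phi,\mathrm{Sn}\phi)$ and analogously for $Z_{i,2}$) produces
\begin{equation*}
\frac{d\rho}{d\phi}=\sum_{i=1}^l b_i\rho^{d_i-k}g_i(\phi)+\varepsilon(\rho,\phi,b,c),
\end{equation*}
where
\begin{equation*}
g_i(\phi)=\mathrm{Cs}^{2q-1}\phi\,Z_{i,1}(\mathrm{Cs}\phi,\mathrm{Sn}\phi)+\mathrm{Sn}^{2p-1}\phi\,Z_{i,2}(\mathrm{Cs}\phi,\mathrm{Sn}\phi),
\end{equation*}
and the remainder $\varepsilon$ collects the $Y$-contribution and all higher-order terms. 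By hypothesis (i), $h_j^{(p,q)}(Y)=0$ for all $j\le k$, hence $\varepsilon=O(\rho^{d_l-k+\delta})$ uniformly in small $b$ for some $\delta>0$. Integrating $\phi$ from $0$ to $T$ gives the displacement
\begin{equation*}
d(\rho_0,b,c)=\Pi(\rho_0,b,c)-\rho_0=\sum_{i=1}^l b_i\rho_0^{d_i-k}\int_0^T g_i(\phi)\,d\phi+\text{higher order in }(\rho_0,b).
\end{equation*}
Next I would identify $\int_0^T g_i(\phi)\,d\phi$ with $I_i$: parameterising the boundary of $\{\rho\le 1\}=\{px^{2q}+qy^{2p}\le 1\}$ by $x=\mathrm{Cs}\phi$, $y=\mathrm{Sn}\phi$ gives $dx=-\mathrm{Sn}^{2p-1}\phi\,d\phi$, $dy=\mathrm{Cs}^{2q-1}\phi\,d\phi$, so Green's theorem yields $I_i=\oint(Z_{i,1}dy-Z_{i,2}dx)=\int_0^T g_i(\phi)\,d\phi$. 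Combined with hypothesis (ii), this says that all $l$ leading coefficients of $d$ are nonzero and the exponents $d_1-k<d_2-k<\dots<d_l-k$ are strictly increasing positive numbers.

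From this pseudopolynomial structure the four conclusions follow in a standard way. For (a), $b=0$ is a center by hypothesis, and if $b_i$ is the first nonzero component then $d(\rho_0,b,c)\sim b_iI_i\rho_0^{d_i-k}\neq 0$ near $\rho_0=0$, so the origin is not a center. For (b), the sign of $d$ near $\rho_0=0$ is the sign of $b_iI_i$, which gives stability, while turning on $b_1,\dots,b_{i-1}$ introduces $i-1$ strictly lower-order monomials in $\rho_0$, and a Rolle/Descartes argument for generalised polynomials with strictly increasing exponents produces at most $i-1$ simple positive zeros near $\rho_0=0$; the upper bound is sharp by choosing alternating signs and decreasing magnitudes of $b_{i-1},b_{i-2},\dots,b_1$. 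For (c) the same argument at $b=0$ with all $l$ parameters free gives cyclicity $l-1$. For (d) multiplying the perturbation by $\epsilon$ multiplies each leading coefficient by $\epsilon$ and the same analysis applies on each level $\{px^{2q}+qy^{2p}=h\}$ for $\epsilon$ small; explicit realisations come by the inductive sign-choice construction already used in (c).

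The main obstacle is handling the analytic perturbation $Y$. Since $Y$ is only analytic (not quasi-homogeneous) with $h_j^{(p,q)}(Y)=0$ for $j\le k$, one must argue uniformly in $\rho_0\to 0$ and in $(b,c)$ in a neighborhood of zero that the remainder $\varepsilon$ lives at strictly higher order in $\rho_0$ than every $\rho_0^{d_i-k}$ appearing in the leading pseudopolynomial. This requires a careful joint expansion of $\Pi(\rho_0,b,c)-\rho_0$ in $(\rho_0,b)$ and an inductive Rolle argument applied along the ordered exponents $d_1-k<\dots<d_l-k$, so that the count of $l-1$ simple zeros of the leading part survives the passage to the full displacement.
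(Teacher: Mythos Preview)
The paper does not prove this statement: Theorem~\ref{th9} is quoted verbatim from \cite{CGM1} and used as a black box in the proofs of Theorems~\ref{th3}--\ref{th5}. There is therefore no ``paper's own proof'' to compare your proposal against.

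That said, your sketch follows the standard strategy (and indeed the strategy of \cite{CGM1}): pass to $(p,q)$-polar coordinates, expand the return map as a generalized power series in $\rho_0$, identify the leading coefficients with the divergence integrals $I_i$ via Green's theorem, and then run an inductive Rolle-type argument on the resulting pseudopolynomial. One point deserves more care than you give it. You write that the remainder satisfies $\varepsilon=O(\rho^{d_l-k+\delta})$ ``uniformly in small $b$'', justified by $h_j^{(p,q)}(Y)=0$ for $j\le k$. That hypothesis only guarantees that the $Y$-contribution to $d\rho/d\phi$ starts at order $\rho^1$, not at order $\rho^{d_l-k+1}$; since the $d_i$ can be arbitrarily large, the $Y$-terms may well appear at \emph{lower} $\rho$-order than some of the $b_iZ_i$-terms. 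What saves the argument is not a crude order estimate on $\varepsilon$ but the center hypothesis: when $b=0$ the full displacement $d(\rho_0,0,c)$ vanishes identically, so one should expand $d(\rho_0,b,c)$ in $b$ (not just in $\rho_0$) and observe that the $b$-linear part is exactly $\sum_i b_iI_i\rho_0^{d_i-k}(1+o(1))$. You hint at this in your final paragraph, but the body of the argument mislocates the mechanism.
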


The following  remark was given in \cite{CGM1}.
\begin{remark}\label{r10}
If we change the hypotheses of Theorem \ref{th9} by
\begin{itemize}
\item[(i$'$)]$Y(x,y,c)$  has polynomial  components,  $h_i^{(p,q)}(Y(x,y,c))=0$ for all $i\geq k=2pq-p-q+1$  and the differential equations associated $b=0$ has a center at infinity.
\item[(ii$'$)] $Z_i$ are $(p,q)$-quasi-homogeneous vector fields of weighted degree $d_i$, $d_i<d_j$ when $i>j$, and  $d_1<k=2pq-p-q+1$, and furthermore
\begin{equation*}
I_i=\iint_{px^{2q}+qy^{2p}\leq 1}\mathrm{div} Z_i(x,y)dxdy\not=0\,\,\,\,\,\,\mathrm{for \,\,all}\,\, i,
\end{equation*}
\end{itemize}
then the same conclusions hold, changing the word ``origin" to ``infinity".
\end{remark}

The above two theorems and remark are important for the  proof of Theorem \ref{th3}-\ref{th5}.

\section{Proof of Theorem \ref{mainth}}\label{s3}

In this section we will  prove  Theorem \ref{mainth}.
\begin{lemma}\label{l10}
The origin is a unique (real and finite) singular point of system \eqref{eq1}.
\end{lemma}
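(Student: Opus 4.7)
The plan is to use the coprimality $(P_m,Q_n)=1$ together with the quasi-homogeneous structure to rule out any nonzero singular point, arguing by contradiction. That the origin is a singular point is immediate, since every monomial in $P_m$ and $Q_n$ has $i+j\geq 1$ (the weighted degrees $p+m-1$ and $q+n-1$ are positive integers); so the content of the lemma lies in uniqueness.

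The key structural input is that, since $(p,q)=1$, any $(p,q)$-quasi-homogeneous polynomial of weighted degree $d$ factors uniquely as $x^\alpha y^\beta H(x^q,y^p)$, where $H(u,v)=\sum_{k=0}^{K}a_k u^k v^{K-k}$ is an ordinary binary form with $a_0 a_K\neq 0$. This holds because the lattice points $(i,j)$ with $i,j\geq 0$ and $pi+qj=d$ form a single arithmetic progression with step $(q,-p)$; pulling out the minimal $x$- and $y$-powers leaves a homogeneous polynomial in $(x^q,y^p)$. I would write $P_m=x^{\alpha_1}y^{\beta_1}H(x^q,y^p)$ and $Q_n=x^{\alpha_2}y^{\beta_2}G(x^q,y^p)$ in this form, and then let $(x_0,y_0)\neq(0,0)$ be a hypothetical second singular point, splitting into three cases according to whether $x_0$ or $y_0$ vanishes.

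If $x_0=0$ and $y_0\neq 0$, the assumption $P_m(0,y_0)=0$ combined with the nonvanishing of the $v^K$-coefficient of $H$ forces $\alpha_1\geq 1$; by the same argument $\alpha_2\geq 1$, so $x$ is a nonconstant common factor of $P_m$ and $Q_n$, contradicting coprimality. The case $y_0=0$, $x_0\neq 0$ is symmetric and yields $y$ as a common factor.

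The main case is $x_0 y_0\neq 0$. Setting $u_0=x_0^q$, $v_0=y_0^p$ (both nonzero reals), the vanishing conditions become $H(u_0,v_0)=G(u_0,v_0)=0$, so the binary forms $H$ and $G$ share a real projective root $[u_0:v_0]$ and hence the common real linear factor $v_0 u-u_0 v$ in $\mathbb{R}[u,v]$. Undoing the substitution, the nonconstant polynomial $y_0^p x^q-x_0^q y^p\in\mathbb{R}[x,y]$ divides both $P_m$ and $Q_n$, once more contradicting $(P_m,Q_n)=1$. The only slightly delicate step is this last one, namely that a common zero of two real binary forms produces a common factor defined over the same field as the root; this is automatic here since the root is real, so no complex detour is needed.
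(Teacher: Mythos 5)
Your proof is correct, but it takes a genuinely different route from the paper's. The paper argues in two lines directly from the functional equation: if $(a,b)\neq(0,0)$ were singular, then $P_m(\lambda^pa,\lambda^qb)=\lambda^{p+m-1}P_m(a,b)=0$ and likewise for $Q_n$, so the whole curve $\{(\lambda^pa,\lambda^qb):\lambda\in\mathbb{R}\}$ consists of common zeros of $P_m$ and $Q_n$, contradicting coprimality --- this silently invokes the standard fact that coprime polynomials in $\mathbb{R}[x,y]$ have only finitely many common zeros (B\'ezout, or a resultant argument). You instead exploit the explicit monomial structure: using $(p,q)=1$ to write each quasi-homogeneous polynomial as $x^{\alpha}y^{\beta}H(x^q,y^p)$ with $H$ a binary form having nonzero extreme coefficients, you extract an explicit nonconstant common factor ($x$, $y$, or $y_0^px^q-x_0^qy^p$) from a hypothetical nonzero common zero. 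Each approach buys something: the paper's proof is shorter and does not use $(p,q)=1$, so it works before the normalization of Lemma \ref{l1}; yours is more self-contained at the level of elementary polynomial algebra, avoids the finiteness-of-common-zeros fact, and exhibits the common factor concretely --- note that $y_0^px^q-x_0^qy^p$ vanishes precisely along the paper's curve $\{(\lambda^px_0,\lambda^qy_0)\}$, so the two arguments are really two views of the same phenomenon, one detecting the invariant curve of common zeros analytically and the other realizing it as a divisor of both components. All your steps check out, including the normalization $a_0a_K\neq 0$ after removing minimal powers, the case split when $x_0$ or $y_0$ vanishes, and the descent of the real linear factor $v_0u-u_0v$ through the substitution $u=x^q$, $v=y^p$.
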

\begin{proof}By definition of $(p,q)$ semi-quasi-homogeneous systems, we have that
\begin{equation}\label{eq8}
P_m(\lambda^px,\lambda^qy)=\lambda^{p+m-1}P_m(x,y),\,\,\, Q_n(\lambda^px,\lambda^qy)=\lambda^{q+n-1}Q_n(x,y).
\end{equation}
Taking $\lambda=0$ into \eqref{eq8} for fixed $(x,y)$, one obtains $P_m(0,0)=Q_n(0,0)=0$. Therefore the origin is a singular point of system \eqref{eq1}.

Let $(a,b)$ be the singular point of system \eqref{eq1} with $(a,b)\not=(0,0)$. It follows from \eqref{eq8} that $P_m(\lambda^pa,\lambda^qb)=Q_n(\lambda^pa,\lambda^qb)=0$. This yields that
the curve $\{(\lambda^pa,\lambda^qb):\lambda\in\mathbb{R}\}$ is fulfilled of singular points, in contradiction that $P_m(x,y)$ and $Q_n(x,y)$ are coprime.
\end{proof}

\begin{proof}[Proof of Theorem \ref{mainth}(i)] Suppose that $P_m(x,y)$ is defined in \eqref{eq1}. The pair $(i,j)$ in the expression of $P_m(x,y)$ is a non-negative solution of the equation
\begin{equation}\label{eq9}
pi+qj=p-1+m.
\end{equation}
As usual, we use the notation {\it non-negative integer (resp.  integer) solution} if $i$ and $j$ are non-negative integers (resp.  integers). If $q\nmid p+m-1$, then  there does not exist $j$ such that $(0,j)$ satisfies \eqref{eq9}. Therefore $P_m(0,y)=0$, which implies that $x=0$ is an invariant lines of system \eqref{eq1}. Since Lemma \ref{l10} shows that the origin is a unique singular point of system \eqref{eq1}, system \eqref{eq1} has no periodic orbit.

If $p\nmid q+n-1$, then by the same arguments as above we conclude that  $y=0$ is is an invariant lines of system \eqref{eq1}. Therefore system \eqref{eq1} has no periodic orbit.
\end{proof}

To prove the assertion (ii) of Theorem \ref{mainth}, we give the following lemma.
 \begin{lemma}\label{l11}
Suppose that the Covention (i) holds, $q$ is even and  $n$  is odd, then system \eqref{eq1'} has no periodic orbit.
\end{lemma}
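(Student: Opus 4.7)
The plan is to derive a contradiction from a sign analysis of $\dot y$ along the $x$-axis, using Lemma \ref{l10} to locate the unique singularity.

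\emph{Step 1: Parity of $r_2$.} Since $q$ is even and $n$ is odd, $q+n-1$ is even. By Theorem \ref{mainth}(i) we may assume $p \mid q+n-1$, so $r_2=(q+n-1)/p$ is a positive integer and the product $r_2 \cdot p$ is even. The Convention forces $p$ odd, so $r_2$ itself must be even; in particular $r_2 \geq 2$.

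\emph{Step 2: Restriction of $Q_n$ to $\{y=0\}$.} Every term of $Q_n(x,y)=\sum_{j=0}^{[r_2/q]} b_j x^{r_2-jq} y^{jp}$ with $j\geq 1$ carries a positive power of $y$, so $Q_n(x,0)=b_0 x^{r_2}$. Because $r_2$ is an even positive integer, this polynomial in $x$ is either identically zero (if $b_0=0$) or keeps the fixed sign of $b_0$ throughout $\{x\neq 0\}$.

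\emph{Step 3: Topological obstruction.} Suppose $\gamma$ were a periodic orbit of \eqref{eq1'}. By Lemma \ref{l10} the origin is the unique singular point, so by Poincar\'e index theory $\gamma$ encloses the origin. A standard Jordan-curve argument then shows that $\gamma$ meets every ray through the origin; in particular the coordinate $y$ takes both positive and negative values along $\gamma$, so the restriction of $y$ to $\gamma$ has at least one up-zero (where $\dot y>0$) and at least one down-zero (where $\dot y<0$). If $b_0\neq 0$ this is incompatible with Step 2, since at every such zero one has $\dot y=b_0 x^{r_2}$ with the fixed sign of $b_0$. If $b_0=0$ then $\{y=0\}$ is an invariant set of the flow, so no orbit can contain points with $y>0$ and points with $y<0$; this again contradicts what was established for $\gamma$.

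The only nontrivial ingredient is the parity computation of Step 1, which uses all three hypotheses of the lemma together with the Convention that $p$ is odd. Once Step 2 fixes the sign structure of $\dot y$ on the $x$-axis, the topological obstruction in Step 3 is automatic, and both cases $b_0=0$ and $b_0\neq 0$ rule out any periodic orbit.
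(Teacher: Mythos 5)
Your proof is correct and follows essentially the same route as the paper: both reduce to the sign of $\dot y|_{y=0}=b_0x^{r_2}$, use the parity of $r_2=(q+n-1)/p$ (even, since $p$ is odd while $q+n-1$ is even) to see this sign is fixed when $b_0\neq 0$, and handle $b_0=0$ via the invariant line $y=0$. The only difference is that you spell out the details the paper leaves implicit --- Lemma \ref{l10} plus index theory to show any periodic orbit surrounds the origin, and the up-crossing/down-crossing argument on $\{y=0\}$ --- which is a harmless elaboration, not a different method.
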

 \begin{proof}
 By direct computation we have
 \begin{equation*}
 \dfrac{dy}{dt}\Big|_{y=0}=Q_n(x,0)=b_0x^{r_2}=b_0(x^{1/p})^{q-1+n}.
 \end{equation*}
 If $b_0\not=0$ and $n$ is odd, then $(dy/dt)|_{y=0}$  does not change the sign.  Since the flow on $x$-axis have the same direction in the phase plane, there is no periodic orbit surrounding the origin.

 If $b_0=0$, then $y=0$ is an invariant line of system \eqref{eq1'}, which yields the assertion. This finishes the proof.
 \end{proof}

\begin{proof} [Proof of Theorem \ref{mainth}(ii)] If $q| p+m-1$, then $(0,r_1)$ is a non-negative integer solution of equation \eqref{eq9}. Moreover, all the integer solution of equation \eqref{eq9} are
\begin{equation*}
\{(iq,r_1-ip): i\in \mathbb{Z}\},
\end{equation*}
which implies that $P_m(x,y)=\sum_{i=0}^{[r_1/p]}a_{i}x^{iq}y^{r_1-ip}$. By the same arguments, we have $Q_n(x,y)=\sum_{j=0}^{[r_2/q]}b_jx^{r_2-jq}y^{jp}$.

Suppose that system \eqref{eq1'} has at least one periodic orbit.  Note that system \eqref{eq1'} has a unique singular point at the origin and hence all periodic orbits surround the origin.

We split the proof into two cases.

    (a) Assume that  $p$ and $q$ are odd. By applying the definition of a $(p,q)$ semi-quasi-homogeneous system, we have
\begin{equation*}
P_m(-x,-y)=P_m((-1)^px,(-1)^qy)=(-1)^{p-1+m}P_m(x,y)=(-1)^mP(x,y).
\end{equation*}
If $m$ is even, then $P_m(-x,-y)=P(x,y)$, which implies that $dx/dt|_{x=0}=P(0,y)=P(0,-y)$. Hence the flow on $x=0$ goes either from left to right, or from right to left in the phase plane, in contradiction with the assumption that system \eqref{eq1'} has at least a period orbit. Therefore $m$ is odd, which gives $P_m(-x,-y)=-P_m(x,y)$. It follows from \eqref{eq1'} that
\begin{equation*} P_m(-x,-y)=\sum_{i=0}^{[r_1/p]}a_{i}(-1)^{r_1+(q-p)i}x^{iq}y^{r_1-ip}=-\sum_{i=0}^{[r_1/p]}a_{i}x^{iq}y^{r_1-ip}=-P_m(x,y).
\end{equation*}
Since both $p$ and $q$ are odd, one obtains $a_i((-1)^{r_1}+1)=0$. If $r_1$ is even, then $a_i=0$ for $i=0,1,2,\cdots,[r_1/p]$, i.e. $P_m(x,y)\equiv 0$. In this case system \eqref{eq1'} has no periodic orbit. Therefore $r_1$ is odd.

Using the same arguments as above, we conclude that both $n$ and $r_2$ are odd. The statement (ii.1) is proved.

(b)  Assume that  $p$ is odd and  $q$ is  even. By  the definition of $r_1$ and $r_2$,  we conclude that  $m$ is even, and $r_2$ is odd (resp. even) if and only if $n$ is even (resp. odd).
Since we suppose that system \eqref{eq1'} has at least one periodic orbit, it follows from Lemma \ref{l11} that $n$ is even, which implies that $r_2$ is odd. In what follows  we only need to prove that $r_1$ is odd.

 Note that
\begin{equation*}
\frac{dx}{dt}\Big|_{x=0}=a_0y^{r_1}.
\end{equation*}
If $r_1$ is even, then $(dx/dt)|_{x=0}$ does not change the sign or equals to zero identically, which implies that system \eqref{eq1'} has no periodic orbit. The assertion (ii.2) follows.
\end{proof}

The following theorem will be used to prove the assertion (iii) of Theorem \ref{mainth}.
\begin{theorem}\label{th12}
Suppose that (ii.1)  of Theorem \ref{mainth} holds and $q| p+m-1,\,\,p| q+n-1$ , then  there exist $\bar P_m(x,y)$ and $\bar Q_n(x,y)$ such that system
\begin{equation}\label{p}
\frac{dx}{dt}=y^{r_1}+\epsilon \bar P_m(x,y),\,\,\frac{dy}{dt}=-x^{r_2}+\epsilon \bar Q_n(x,y)
\end{equation}
has at least $[([r_1/p]+1)/2]+[([r_2/q]+1)/2]-1$  limit cycles, where $\epsilon$ is a small parameter, $r_1,\,r_2$ are defined in Theorem \ref{mainth},   and
\begin{equation}
\bar P_m(x,y)=\sum_{i=1}^{[r_1/p]}a_{i}x^{iq}y^{r_1-ip},\,\,\,\,\,\bar Q_n(x,y)=\sum_{j=1}^{[r_2/q]}b_jx^{r_2-jq}y^{jp}.
\end{equation}
\end{theorem}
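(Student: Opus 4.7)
The plan is to compute the first-order Melnikov function of the family \eqref{p} on the period annulus of $\dot x=y^{r_1},\,\dot y=-x^{r_2}$ and to choose the coefficients $a_i,b_j$ so that it has the required number of simple positive zeros. Since (ii.1) forces $r_1,r_2$ to be odd, the unperturbed system is Hamiltonian with first integral $H=x^{r_2+1}/(r_2+1)+y^{r_1+1}/(r_1+1)$, and every level set $\{H=h\}$, $h>0$, is a smooth oval surrounding the origin; hence $(0,0)$ is a global center and \eqref{p} is a standard perturbation of it.

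First I pass to the generalized polar coordinates $x=\rho^{l_1}\mathrm{Cs}\,\phi$, $y=\rho^{l_2}\mathrm{Sn}\,\phi$ with $l_1=(r_1+1)/2$ and $l_2=(r_2+1)/2$. Lemma~\ref{l6}(i) gives $H=\rho^{2l_1l_2}/(2l_1l_2)$ (so unperturbed orbits are circles $\rho=\mathrm{const}$) and a Jacobian calculation yields $dx\,dy=\rho^{l_1+l_2-1}\,d\rho\,d\phi$. The classical first-order Melnikov function of a Hamiltonian perturbation is $M(h)=-\iint_{H\le h}\mathrm{div}(\bar P_m,\bar Q_n)\,dx\,dy$. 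Computing it term by term and using the identity $l_1q-l_2p=(m-n)/2$, the monomial $a_ix^{iq}y^{r_1-ip}\partial_x$ contributes a scalar multiple of $a_i\bigl(\int_0^T\mathrm{Cs}^{iq-1}\phi\,\mathrm{Sn}^{r_1-ip}\phi\,d\phi\bigr)h^{1+i(m-n)/(4l_1l_2)}$, and analogously the monomial $b_jx^{r_2-jq}y^{jp}\partial_y$ contributes a scalar multiple of $b_j\bigl(\int_0^T\mathrm{Cs}^{r_2-jq}\phi\,\mathrm{Sn}^{jp-1}\phi\,d\phi\bigr)h^{1-j(m-n)/(4l_1l_2)}$. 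By Lemma~\ref{l7} combined with (ii.1) and the Convention (so $p,q,r_1,r_2$ are all odd), the angular integrals vanish unless $i$ (respectively $j$) is odd. Writing $\mu_i=1+i(m-n)/(4l_1l_2)$ and $\nu_j=1-j(m-n)/(4l_1l_2)$,
\[
M(h)=\sum_{\substack{1\le i\le[r_1/p]\\ i\text{ odd}}}\widetilde a_i\,h^{\mu_i}+\sum_{\substack{1\le j\le[r_2/q]\\ j\text{ odd}}}\widetilde b_j\,h^{\nu_j},
\]
with $\widetilde a_i=c_ia_i$, $\widetilde b_j=d_jb_j$ and nonzero constants $c_i,d_j$.

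Assume $m>n$ (otherwise apply the involution $(x,y,p,q,m,n)\leftrightarrow(y,x,q,p,n,m)$, which preserves (ii.1)); then $\mu_i>1>\nu_j$ for every admissible $i,j$, so all the exponents in $M(h)$ are pairwise distinct. Their total number is $N_1+N_2$, where $N_1=[([r_1/p]+1)/2]$ and $N_2=[([r_2/q]+1)/2]$ count the odd integers in $\{1,\dots,[r_1/p]\}$ and $\{1,\dots,[r_2/q]\}$ respectively. Since any finite family of distinct real powers of $h$ is an extended complete Chebyshev system on $(0,\infty)$, the free parameters $a_i,b_j$ can be tuned so that $M(h)$ has exactly $N_1+N_2-1$ simple positive zeros. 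The Poincar\'e--Pontryagin theorem then produces, for all sufficiently small $\epsilon>0$, a hyperbolic limit cycle of \eqref{p} bifurcating from each such zero, which yields the asserted lower bound. The main obstacle is the monomial-by-monomial polar computation together with the parity bookkeeping that isolates the surviving terms; once $M(h)$ is assembled in the above generalized-polynomial form with distinct exponents, the Chebyshev and perturbation-theoretic steps are classical.
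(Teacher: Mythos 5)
Your proposal is correct and follows essentially the same route as the paper: both compute the first-order Melnikov/Abelian integral of \eqref{p} over the ovals of the quasi-homogeneous Hamiltonian center $\dot x=y^{r_1},\ \dot y=-x^{r_2}$ in the generalized polar coordinates \eqref{pc}, use Lemma \ref{l7} to kill the even-indexed terms, count the surviving $[([r_1/p]+1)/2]+[([r_2/q]+1)/2]$ distinct exponents, and tune the free coefficients to produce the maximal number of simple zeros. The only differences are cosmetic: you evaluate the integral in its divergence (double-integral) form where the paper uses the equivalent line integral $\oint-\bar P_m\,dy+\bar Q_n\,dx$, and you invoke a Chebyshev-system argument for distinct real powers of $h$ where the paper factors out a power of $\rho$ to reduce to an ordinary polynomial in $\xi=\rho^{n-m}$.
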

\begin{proof}
Let  $l_1=(r_1+1)/2,\,l_2=(r_2+1)/2$. System \eqref{p} is a perturbed system of  $(l_1,l_2)$-quasi-homogeneous Hamiltonian system
\begin{equation}\label{H}
\frac{dx}{dt}=y^{r_1},\,\,\frac{dy}{dt}=-x^{r_2}.
\end{equation}
 It is well known  (see for instance \cite{R}) that each simple zero of Abelian integral
\begin{equation*}
I(\rho)=\oint_{\Gamma_\rho}-\bar{P}_m(x,y)dy+\bar{Q}_n(x,y)dx,
\end{equation*}
produces a periodic orbit of system \eqref{p}, where $\Gamma_\rho=\{(x,y):H(x,y)=l_1x^{2l_2}+l_2y^{2l_1}=\rho^{2l_1l_2}\}$ is a closed orbit of system \eqref{H}. Taking the change \eqref{pc},
it follows from Lemma \ref{l6} and Lemma \ref{l7} that
\begin{eqnarray*}
I(\rho)&=&\sum_{i=1}^{[r_1/p]}a_i\rho^{i(l_1q-l_2p)+r_1l_2+l_2}\int_0^T\mathrm{Cs}^{iq+2l_2-1}\phi\mathrm{Sn}^{r_1-ip}\phi d\phi\\[2ex]
&&+\sum_{j=1}^{[r_2/q]}b_j\rho^{-j(l_1q-l_2p)+r_2l_1+l_1}\int_0^T\mathrm{Cs}^{r_2-jq}\mathrm{Sn}^{jp+2l_1-1}\phi d\phi\\[2ex]
&=&\sum_{i=1}^{[r_1/p]}a_i\rho^{i(m-n)/2+2l_1l_2}\int_0^T\mathrm{Cs}^{iq+2l_2-1}\phi\mathrm{Sn}^{r_1-ip}\phi d\phi\\[2ex]
&&+\sum_{j=1}^{[r_2/q]}b_j\rho^{-j(m-n)/2+2l_1l_2}\int_0^T\mathrm{Cs}^{r_2-jq}\mathrm{Sn}^{jp+2l_1-1}\phi d\phi\\[2ex]
&=&\rho^{2l_1l_2}\left(\sum_{i=1}^{[([r_1/p]+1)/2]}\mu_{2i-1}\rho^{(2i-1)(m-n)/2}+\sum_{j=1}^{[([r_2/q]+1)/2]}\nu_{2j-1}\rho^{-(2j-1)(m-n)/2}\right),\\[2ex]
\end{eqnarray*}
where
\begin{equation*}
\mu_i=a_i\int_0^T\mathrm{Cs}^{iq+2l_2-1}\phi\mathrm{Sn}^{r_1-ip}\phi d\phi,\,\,\nu_j=b_j\int_0^T\mathrm{Cs}^{r_2-jq}\mathrm{Sn}^{jp+2l_1-1}\phi d\phi.
\end{equation*}
If $m\leq n$, then
\begin{equation*}
I(\rho)=\rho^{2l_1l_2+(2[([r_1/p]+1)/2]-1)(m-n)/2}F(\rho^{n-m}),
\end{equation*}
where
\begin{equation*}
F(\xi)=\sum_{i=1}^{[([r_1/p]+1)/2]}\mu_{2i-1}\xi^{[([r_1/p]+1)/2]-i}+\sum_{j=1}^{[([r_2/q]+1)/2]}\nu_{2j-1}\xi^{j+[([r_1/p]+1)/2]-1}.
\end{equation*}
Since $F(\xi)$ is a polynomial of $\xi$ with degree at most $[([r_1/p]+1)/2]+[([r_2/q]+1)/2]-1$, $I(\rho)$ has at most $[([r_1/p]+1)/2]+[([r_2/q]+1)/2]-1$ zeros in $(0,\infty)$.
By a suitable choice of $\mu_{2i-1}$'s and $\nu_{2i-1}$'s, we can find a perturbation such that $I(\rho)$ has exactly  $[([r_1/p]+1)/2]+[([r_2/q]+1)/2]-1$ zeros in $(0,\infty)$.

If $m>n$, then result follows by the same arguments as above.
\end{proof}

\begin{proof}[Proof of Theorem \ref{mainth}(iii)] Suppose that (ii.1)  holds.  Consider the $(p,q)$ semi-quasi-homogeneous system \eqref{H}.
 Note System \eqref{H} is also a $(l_1,l_2)$-quasi-homogeneous  Hamiltonian system with a first integral $H(x,y)=l_1x^{2l_2}+l_2y^{2l_1}$. The origin is a center of system \eqref{H} and all solutions are periodic. This proves (a).

 If $b_0=0$, then $y=0$ is an invariant line of system \eqref{eq1'} and system \eqref{eq1'} has no periodic orbit in this case. This proves (b).

 Other assertions in Theorem \ref{mainth}(iii) follow from Theorem \ref{th12}.
\end{proof}

\begin{proof}[Proof of Theorem \ref{mainth}(iv)] Suppose that $L$ is a limit cycle of system \eqref{eq1'}. Let $\varphi_t(0,y_0)$ be a orbit  such that $L$ is a subset of $\omega$ (or $\alpha$) limit set of  $\varphi_t(y_0,0)$ and $\varphi_0(0,y_0)=(0,y_0)$.
By Poincar\'{e}-Bendixson theorem  $\omega(\varphi_t(0,y_0))=L$ (or  $\alpha(\varphi_t(0,y_0))=L$). This implies that there exists $t_1\in \mathbb{R}$ satisfying $\varphi_{t_1}(0,y_0)=(0,y_0')$. On the other hand, if $p$ is odd and $q$ is even, then $P_m(-x,y)=P(x,y),\,\,Q_n(-x,y)=-Q_n(x,y)$, which yields that the phase portraits of system \eqref{eq1'} are symmetric with respect $y$-axis. Therefore $\varphi_t(0,y_0)$ is a periodic orbit. This shows that $L$ is not a limit cycle ( the isolated periodic orbit). Since  there is no other singular point in the finite plane, the origin is a center of system \eqref{eq1'}.
\end{proof}

\section{Proof Theorem \ref{th3}-\ref{th5}}

In this section we will prove Theorem \ref{th3}-\ref{th5}.

\begin{proof}[Proof of Theorem \ref{th3}] Without loss of generality suppose $a_0<0,\,b_0>0$. Taking the changes
\begin{equation}\label{eq14}
\bar x=b_0^{1/(r_2+1)}x,\,\,\bar y=(-a_0)^{1/(r_1+1)}y,\,\,\tau =(-a_0)^{1/(r_1+1)}b_0^{1/(r_2+1)}t,
\end{equation}
system \eqref{eq3} becomes
\begin{equation}\label{eq15}
\frac{d\bar x}{d\tau}=-\bar y^{r_1}+\sum_{i=1}^{[r_1/p]}\bar a_i\bar x^{iq}\bar y^{r_1-ip},\,\,\,\,\,\frac{d\bar y}{d\tau}=\bar x^{r_2},
\end{equation}
where $\bar a_i=a_i(-a_0)^{ip/(r_1+1)-1}b_0^{-(iq+1)/(r_2+1)}$. Let $r_1=2l_1-1$ and $r_2=2l_2-1$. Note that $-\bar y^{r_1}\partial/\partial \bar x+\bar x^{r_2}\partial /\partial \bar y$ is a $(l_1,l_2)$-quasi-homogeneous vector filed of weighted degree
$2l_1l_2-l_2-l_2+1$, and

(i) If   $\bar a_1=\bar a_3=\cdots=\bar a_\kappa=0$, then origin is a center of system \eqref{eq14}.

 To prove this assertion, we rewrite system \eqref{eq15} in $(\rho,\phi)$ coordinates by  $(l_1,l_2)$-polar coordinates  introduced in \eqref{pc}:
 \begin{eqnarray*}
 \dfrac{d\rho}{d\tau}&=&\rho^{2l_1l_2-l_1-l_2+1}\sum_{i=1}^{[r_1/p]}\rho^{i(m-n)/2}\bar a_i\mathrm{Cs}^{iq+2l_2-1}\phi \mathrm{Sn}^{r_1-ip}\phi,\\[2ex]
 \dfrac{d\phi}{d\tau}&=&\rho^{2l_1l_2-l_1-l_2}\left(1-l_2\sum_{i=1}^{[r_1/p]}\rho^{i(m-n)/2}\bar a_i\mathrm{Cs}^{iq+2l_2-1}\phi \mathrm{Sn}^{r_1-ip+1}\phi\right),
 \end{eqnarray*}
which implies that
\begin{equation}\label{eq16}
\frac{1}{\rho^{1+(m-n)/2}}\frac{d\rho}{d\phi}=\frac{\sum_{i=1}^{[r_1/p]}\rho^{(i-1)(m-n)/2}\bar a_i\mathrm{Cs}^{iq+2l_2-1}\phi \mathrm{Sn}^{r_1-ip}\phi}{1-l_2\sum_{i=1}^{[r_1/p]}\rho^{i(m-n)/2}\bar a_i\mathrm{Cs}^{iq+2l_2-1}\phi \mathrm{Sn}^{r_1-ip+1}\phi}\triangleq \frac{A(\rho,\phi)}{G(\rho,\phi)}.
\end{equation}
Recall that $\mathrm{Cs}\phi$ and $\mathrm{Sn}\phi$ are $T$-periodic functions, where $T$ is defined in Lemma \ref{l6}. For $\phi\in [0,T]$ and small $\rho$,  we have $G(\rho,\phi)>1/2$. This yields
that
\begin{equation*}
\left|\frac{A(\rho,\phi)}{G(\rho,\phi)}\right|<M<+\infty,
\end{equation*}
for $\phi\in [0,T]$ as $\rho\rightarrow 0$, where $M$ is a non-negative constant. Integrating both side of \eqref{eq16}, one gets
\begin{equation}
|\rho_2^{-(m-n)/2}-\rho_1^{-(m-n)/2}|=\left|\int_{\phi_1}^{\phi_2}\frac{A(\rho,\phi)}{G(\rho,\phi)}d\phi\right|<M|\phi_2-\phi_1|<+\infty,
\end{equation}
where $0\leq \phi_1<\phi_2\leq T$ and $\rho(\phi_i)=\rho_i,\,\,i=1,2$. The above inequality shows that  $\rho(\phi_2)\not=0$ if $\rho(\phi_1)\not=0$. Therefore the origin is either a focus or a center.

Let $X=(\bar P_m,\bar Q_n)$ be the vector field associated system \eqref{eq15}. If $\bar a_1=\bar a_3=\cdots=\bar a_\kappa=0$, then $\bar P_m(-x,y)=\bar P(x,y),\,\,\bar Q_m(-x,y)=-\bar Q_m(x,y)$, which shows that the phase portraits of system \eqref{eq15} are symmetric with respect $y$-axis . Since the origin is either a focus or a center, the assertion in this part follows.

(ii) If $m>n$, then $-\bar x^{iq}\bar y^{r_1-ip}\partial/\partial \bar x$  is a $(l_1,l_2)$-quasi-homogeneous vector filed of weighted degree
$2l_1l_2-l_2+1+i(m-n)>2l_1l_2-l_1-l_2+1$ for $i\geq 1$. Furthermore, if $i$ is odd, then it follows from Lemma \ref{l6} that
\begin{eqnarray*}
I_i&=&\int\int_{l_1\bar x^{2l_2}+l_2\bar y^{2l_1}\leq 1}\mathrm{div}(-\bar x^{iq}\bar y^{r_1-ip},0)d\bar xd\bar y\\[2ex]
&=&-\oint_{l_1\bar x^{2l_2}+l_2\bar y^{2l_1}=1}\bar x^{iq}\bar y^{r_1-ip}d\bar y\\[2ex]
&=&-\int_0^T\mathrm{Cs}^{iq+2l_2-1}\phi\mathrm{Sn}^{r_1-ip}\phi d\phi\not=0.
\end{eqnarray*}
The results of this theorem  follow from Theorem \ref{th9}.
\end{proof}

\begin{proof}[Proof of Theorem \ref{th4}] If $m=1$, then $r_1=p/q$. Noting that convention (i) holds, we have $q=1$. On the other hand $p|q+n-1$ and $q=1$ imply that $p|n$. Therefore $r_2=n/p$ is odd integer. System \eqref{eq4} follows from \eqref{eq1'}.

(i) Suppose  $a_0=0,\,\,a_1\not=0$. Since  $(P_1(x,y),Q_n(x,y))=1$, one gets $b_{n/p}\not=0$.  The other assertion of (i) follows from Theorem \ref{th8}.

(ii) Suppose $a_0\not=0,\,\,a_1\not=0$.

Assume  $p=1$. By means of change $\bar x=-a_1x-a_0y,\,\,\bar y=-a_0 y$, system \eqref{eq4} becomes
\begin{equation}\label{eq18}
\frac{d\bar x}{dt}=a_1\bar x+a_0\sum_{l=0}^nb_j\left(\frac{\bar x-\bar y}{a_1}\right)^{n-j}\left(\frac{\bar y}{a_0}\right)^{j},\,\,\frac{d\bar y}{dt}=a_0\sum_{l=0}^{n}b_j\left(\frac{\bar x-\bar y}{a_1}\right)^{n-j}\left(\frac{\bar y}{a_0}\right)^{j}.
\end{equation}
Using Theorem \ref{th8} again, the assertion (a.1) and (a.2) follow from \eqref{eq18}.

If $p\geq 3$, then we get the assertion (b) by Theorem \ref{th8}.

(iii) The assertion (iii) follows from  Theorem \ref{th3}.

\end{proof}

\begin{proof}[Proof of Theorem \ref{th5}]
By the same arguments as in the proof of Theorem \ref{th3}, the results follow from Remark \ref{r10}.
\end{proof}

\end{document}